\newtheorem{theorem}{Theorem}
\newtheorem{lemma}[theorem]{Lemma}
\newtheorem{notation}[theorem]{Notation}
\newtheorem{prop}[theorem]{Proposition}
\newtheorem{prob}[theorem]{Problem}
\newtheorem{definition}[theorem]{Definition}
\tikzstyle{vertex}=[circle, draw=black, fill=black, minimum size=2pt, inner sep=1.5]
\newenvironment{proof}{\noindent {\sc Proof}.}
                {\phantom{a} \hfill \framebox[2.2mm]{ } \bigskip}
\renewcommand{\algorithmicrequire}{\textbf{Input:}}
\renewcommand{\algorithmicensure}{\textbf{Output:}}
\algnewcommand{\algorithmicand}{\textbf{ and }}
\algnewcommand{\algorithmicor}{\textbf{ or }}
\algnewcommand{\algAnd}{\algorithmicand}
\algnewcommand{\algOr}{\algorithmicor}
\newcommand{\minus}{\scalebox{0.75}[1.0]{$-$}}
\newcommand{\midarrow}{\tikz \draw[-{Stealth[scale=1.1]}] (0,0) -- (0.1,0);}
\newcommand{\diff}{\textrm{diff}}
\newcommand{\len}{\textrm{len}}
\definecolor{color1}{rgb}{1,0, 0.5} 
\definecolor{color2}{rgb}{0, 0, 0}
\definecolor{color3}{rgb}{1, 0, 0}
\definecolor{color4}{rgb}{0, 0, 1}
\definecolor{color5}{rgb}{0, 1, 1}
\definecolor{color6}{rgb}{0.98, 0.63, 0.89}
\definecolor{color7}{rgb}{0.01, 0.75, 0.24}
\newcommand{\NN}{\mathbb{N}}
\newcommand{\ZZ}{\mathbb{Z}}
\newcommand{\QQ}{\mathbb{Q}}
\newcommand{\RR}{\mathbb{R}}
\def\sign{{\rm sign}}
\def\supp{{\rm supp}}
\def\length{{\rm length}}
\def\indeg{{\rm indeg}}
\def\outdeg{{\rm outdeg}}
\def\next{{\rm next}}
\def\m{{\mu}}
\def\ch{{\rm children}}
\newcommand{\D}{{\mathcal{D}}}
\newcommand{\E}{{\mathcal{E}}}
\newcommand{\W}{{\mathcal{W}}}
\newcommand{\B}{{\mathcal{B}}}
\newcommand{\A}{{\mathcal{A}}}
\renewcommand{\SS}{{\mathcal{S}}}
\renewcommand{\O}{{\mathcal{O}}}
\renewcommand{\S}{{\Omega}}
\renewcommand{\P}{{\mathcal{P}}}
\def\lvec{\overleftarrow}
\def\rvec{\overrightarrow}
\title{Completing the solution of the directed Oberwolfach  \\ problem with cycles of equal length}
\author{Alice Lacaze-Masmonteil\footnote{Email: alaca054@uottawa.ca. Mailing address: Department of Mathematics and Statistics, University of Ottawa,150 Louis-Pasteur Private, Ottawa, ON, K1N 9A7, Canada.}, University of Ottawa}
\begin{document}
\maketitle \baselineskip 17pt

\begin{center}
{\bf Abstract}
\end{center}

In this paper, we give a solution to the last outstanding case of the directed Oberwolfach problem with tables of uniform length. Namely, we address the two-table case with tables of equal odd length.  We prove that the complete symmetric digraph on $2m$ vertices, denoted $K^*_{2m}$, admits a resolvable decomposition into directed cycles of odd  length $m$. This completely settles the directed Oberwolfach problem with tables of uniform length. \\

\noindent {\bf Keywords}: Directed Oberwolfach problem; resolvable directed cycle decomposition; complete symmetric digraph,  Mendelsohn design. 

\section{Introduction}

In this paper, we address the last open case of the directed Oberwolfach problem with tables of uniform length, namely the case with two tables of odd length. A variation of the celebrated Oberwolfach problem, the directed Oberwolfach problem asks whether $t$ conference attendees can be seated at $k$ round tables seating $m_1, m_2, \ldots, m_k$ guests, respectively, over the course of $t-1$ nights, with the crux being that each guest is to be seated to the right of every other guest exactly once. In Problem \ref{prob:ini} below, we formulate this problem in graph-theoretic terms for the case of equal-size tables.

\begin{prob}
\label{prob:ini}
Let $\alpha$ and $m$ be positive integers. Identify all values of $\alpha$ and $m$ for which $K^*_{\alpha m}$ admits a resolvable decomposition into directed cycles of length $m$. 
\end{prob}

Observe that a resolvable decomposition of $K^*_{\alpha m}$ into directed cycles of length $m$ is equivalent to a resolvable Mendelsohn design with blocks of size $m$ \cite{ColDin}. 

We first point out that a solution to the original Oberwolfach problem with tables of uniform length can be found in \cite{AlsHag, AlsSch, HofSch, HuaKot}.  In \cite{AlsGavSaj}, Alspach et al. show that $K^*_n$ admits a decomposition into directed cycles of length $m$  if and only if $m$ divides the number of arcs in $K^*_n$ and $(n,m) \not \in \{(4,4), (6,3), (6,6)\}$.  

Problem \ref{prob:ini} has been solved in the following cases. In \cite{BerGerSot}, Bermond et al. proved that $K^*_{3\alpha}$ admits a resolvable decomposition into directed cycles of length 3 if and only if $\alpha \neq 2$. Then, Bennett and Zhang \cite{BenZha} and Adams and Bryant \cite{AdaBry} jointly showed that $K^*_{4\alpha}$ admits a resolvable decomposition into directed cycles of length 4 if and only if $\alpha \neq 1$. Abel et al. \cite{Abel} settled the existence of a resolvable decomposition of $K^*_{5\alpha}$ into directed cycles of length 5 for $\alpha \geqslant 103$, with a few possible exceptions.  In \cite{Til}, Tillson settled the existence of a directed Hamiltonian decomposition of $K^*_m$ for $m$ even. An important step was taken by Burgess and \v{S}ajna \cite{BurSaj}, who solved Problem \ref{prob:ini} for the cases $m$ is even, and when $\alpha$ and $m$ are odd. Regarding the case $\alpha$ even and $m$ odd, Burgess and \v{S}ajna showed that if $K^*_{2m}$ admits a resolvable decomposition into directed cycles of length $m$, then $K^*_{\alpha m}$ also admits a resolvable decomposition into directed cycles of length $m$. Therefore, to completely resolve the directed Oberwolfach problem with cycles of uniform length, it suffices to construct a resolvable decomposition of $K_{2m}^*$ into directed cycles of length $m$ for $m$ odd, $m\geqslant 5$. This problem has proven to be particularly difficult and has thus far only been solved for a finite number of cases. 

\begin{theorem}  \cite{BurFranSaj} \label{BurFraSaj}
Let  $m$ be an odd integer such that $5 \leqslant m \leqslant 49$. The digraph $K^*_{2m}$ admits a resolvable decomposition into directed cycles of length $m$. 
\end{theorem}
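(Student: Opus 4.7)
My plan is to apply the classical method of differences. Identify the vertex set of $K^*_{2m}$ with $\ZZ_{2m-1}\cup\{\infty\}$ and seek, for each odd $m$ in the range $5\leqslant m\leqslant 49$, a single \emph{starter} 2-factor $F$ of $K^*_{2m}$ consisting of exactly two vertex-disjoint directed $m$-cycles. The group $\ZZ_{2m-1}$ acts on the vertex set by fixing $\infty$ and translating the finite coordinates; its $2m-1$ translates of $F$ will form the required resolvable decomposition of $K^*_{2m}$ provided they are pairwise arc-disjoint, and this will translate into an explicit difference condition on $F$.

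Exactly one cycle of $F$ can contain $\infty$ (since there is a unique such vertex), and it contributes one arc of the form $(x,\infty)$ and one of the form $(\infty,y)$. Under the $\ZZ_{2m-1}$-action, the orbits of these two arcs sweep out all $2(2m-1)$ arcs incident with $\infty$, with no further condition beyond their mere existence. The remaining $2m-2$ arcs of $F$ lie inside $\ZZ_{2m-1}$, and the translates of these cover the $(2m-1)(2m-2)$ arcs among the finite vertices without repetition if and only if the directed differences $b-a$ of the arcs $(a,b)$ realize each nonzero element of $\ZZ_{2m-1}$ exactly once. So the problem reduces to exhibiting, for each odd $m$ with $5\leqslant m\leqslant 49$, a 2-factor of $K^*_{2m}$ that splits into two directed $m$-cycles (one passing through $\infty$) and whose $2m-2$ finite arcs realize every nonzero difference in $\ZZ_{2m-1}$ exactly once.

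Since only $23$ values of $m$ are involved, this final step is amenable to computer search: for each $m$, one enumerates candidate directed cycles and tests the difference condition via backtracking, pruning on partial difference collisions and on the cycle-splitting requirement. Verifying a successful starter is then mechanical. The principal difficulty I anticipate is the cycle-splitting constraint itself, which is fairly restrictive: the method of differences tends to produce a single Hamilton cycle more readily than a 2-factor whose components have the prescribed odd length $m$. In the smallest cases, such as $m=5$ or $m=7$, the search space is small and the constraints are tight; if generic enumeration fails there, one may need tailored starters that exploit the arithmetic of $\ZZ_{2m-1}$ directly, or auxiliary building blocks such as short-orbit subfactors. Once all $23$ starters are in hand, the theorem follows by the difference argument outlined above.
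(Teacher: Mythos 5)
This theorem is not proved in the present paper at all: it is quoted from \cite{BurFranSaj}, so the only comparison available is with that reference. Your reduction is the standard one-rotational argument and, as far as the method goes, it is both correct and essentially the same strategy used in \cite{BurFranSaj}: take $V(K^*_{2m})=\ZZ_{2m-1}\cup\{\infty\}$, let $\ZZ_{2m-1}$ act by translation fixing $\infty$, and observe that the $2m-1$ translates of a starter $\vec{C}_m$-factor $F$ are pairwise arc-disjoint (hence a factorization, by the arc count $2m(2m-1)$) exactly when the $2m-2$ finite arcs of $F$ realize each nonzero difference of $\ZZ_{2m-1}$ once, the two arcs at $\infty$ taking care of themselves. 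All the counting in your second paragraph checks out, including the point that one cycle of $F$ contains $\infty$ and contributes $m-2$ finite arcs while the other contributes $m$.

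The gap is that this is a proof schema, not a proof. The statement is a finite existence claim, and its entire content is the existence of a suitable starter for each of the $23$ values of $m$; you assert these can be found by backtracking search but exhibit none of them, and you yourself flag that the cycle-splitting constraint might defeat generic enumeration for small $m$, leaving open whether the search succeeds at all. Until the starters are written down (or at least a verifiable data file is produced, together with the mechanical check that each candidate is a $2$-regular spanning subdigraph splitting into two $m$-cycles with all finite differences distinct), nothing has been proved. A complete write-up along your lines would consist of the reduction above plus a table of $23$ starters --- which is, in substance, what \cite{BurFranSaj} supplies.
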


The main result of this paper is stated in Theorem \ref{thm:main} below.

\begin{theorem}
\label{thm:main}
Let $m$ be an odd integer such that $m \geqslant 5$. The digraph $K^*_{2m}$ admits a resolvable decomposition into directed cycles of length $m$.
\end{theorem}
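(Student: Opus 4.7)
The plan is to establish Theorem \ref{thm:main} by combining Theorem \ref{BurFraSaj}, which handles $5 \leqslant m \leqslant 49$, with a general construction for every odd $m \geqslant 51$. The natural framework for the latter is a starter-type construction via the difference method. I would identify the vertex set of $K^*_{2m}$ with $\mathbb{Z}_{2m-1} \cup \{\infty\}$ and let $\mathbb{Z}_{2m-1}$ act by translation, fixing $\infty$. Under this action the arcs of $K^*_{2m}$ partition into $2m$ orbits: the $2m-2$ finite difference classes $\{(i, i+d) : i \in \mathbb{Z}_{2m-1}\}$ for $d \in \mathbb{Z}_{2m-1} \setminus \{0\}$, together with the two classes of arcs into and out of $\infty$.

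A 2-factor $F$ consisting of two directed cycles of length $m$ develops under this action into a resolvable decomposition of $K^*_{2m}$ into directed $m$-cycles precisely when the cycle of $F$ containing $\infty$ uses both infinity classes plus exactly $m-2$ distinct finite differences, and the second cycle of $F$ uses each of the remaining $m$ finite differences exactly once. Constructing such a starter 2-factor for every odd $m \geqslant 51$ would therefore complete the proof. To build the starter, I would write the cycle through $\infty$ as $(\infty, a_1, a_2, \ldots, a_{m-1}, \infty)$ with $a_1, \ldots, a_{m-1}$ distinct elements of $\mathbb{Z}_{2m-1}$ whose consecutive differences realize $m-2$ distinct nonzero residues, and construct a directed $m$-cycle on the remaining $m$ elements of $\mathbb{Z}_{2m-1}$ whose consecutive differences cover the remaining $m$ nonzero residues exactly once. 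Based on how such constructions typically behave, I expect to split into subcases according to the residue of $m$ modulo a small integer (most likely $m \bmod 4$) and to exhibit explicit formulas for the vertex sequences of the two cycles as functions of $m$.

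The main obstacle is the joint verification that, for each family, (i) each prescribed sequence traces out a genuine directed $m$-cycle with no repeated vertices, (ii) the two cycles are vertex-disjoint and together span $\mathbb{Z}_{2m-1} \cup \{\infty\}$, and (iii) the multiset of all their consecutive differences equals $\mathbb{Z}_{2m-1} \setminus \{0\}$, each element appearing exactly once. Condition (iii) is a Skolem-type covering constraint that is always the delicate point in difference-method constructions; conditions (i) and (ii) reduce to showing that certain collections of partial sums are pairwise distinct modulo $2m-1$. These requirements pull against one another, so the art lies in designing sequences whose partial sums are transparent enough that each verification reduces to elementary modular arithmetic. I would expect the $m \equiv 1 \pmod 4$ and $m \equiv 3 \pmod 4$ cases to require genuinely different formulas, each packaged in its own lemma and verified separately, with any residual small values of $m$ not covered by the uniform family either absorbed into Theorem \ref{BurFraSaj} or handled by an explicit ad hoc starter.
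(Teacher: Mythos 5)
Your framework is internally consistent up to the point where you assert that a suitable starter $2$-factor exists, but at that point the plan collapses: no starter of the kind you describe can exist, for any $m$. The obstruction is a telescoping-sum argument. Write the cycle through $\infty$ as $\infty\, a_1\, a_2 \ldots a_{m-1}\, \infty$; its finite arcs are $(a_1,a_2),\ldots,(a_{m-2},a_{m-1})$, and their differences sum to $a_{m-1}-a_1 \pmod{2m-1}$. The second cycle is a closed directed cycle on finite vertices, so its $m$ differences sum to $0 \pmod{2m-1}$. If, as your development requires, the $2m-2$ finite arcs of $F$ realize each nonzero residue of $\mathbb{Z}_{2m-1}$ exactly once, then the total of all these differences is $1+2+\cdots+(2m-2)=(m-1)(2m-1)\equiv 0$, whence $a_{m-1}-a_1\equiv 0$ and $a_{m-1}=a_1$, contradicting the distinctness of the vertices on the first cycle. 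So the Skolem-type covering condition (iii) is not merely delicate; it is unsatisfiable in this one-rotational setting over $\mathbb{Z}_{2m-1}$ with a single fixed point, and no choice of formulas or case split on $m \bmod 4$ can repair it. Any rescue would have to change the group action itself (short orbits, a starter--adder scheme, several base factors, or a point set with more than one kind of difference), which is a different proof, not a verification detail.

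This is in fact why the paper avoids pure difference methods over a cyclic group of order $2m-1$: it instead writes $K^*_{2m}=K^*_m\wr K^*_2$ on the vertex set $\mathbb{Z}_m\times\{x,y\}$, where arcs carry both a difference in $\mathbb{Z}_m$ and a type (pure or mixed), decomposes $K_m$ into the circulant $X(m,\{1,3\})$ and Hamiltonian cycles via Theorem \ref{berm}, and then builds explicit $\vec{C}_m$-factorizations of the three digraphs $H_{2m}$, $L_{2m}$, $G_{2m}$ (Lemmas \ref{lem:1} and \ref{lem:2}, Proposition \ref{thm:cand}), with Lemma \ref{thm:reduction} absorbing the case $3\mid m$. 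The extra degree of freedom coming from the two levels of the wreath product is precisely what circumvents the parity obstruction that defeats your construction.
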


Theorem \ref{thm:main}, in conjunction with results from  \cite{Abel, AdaBry, BenZha, BerGerSot, BurFranSaj, BurSaj, Til}, implies a complete resolution of the directed Oberwolfach problem with tables of uniform length stated as Theorem \ref{mains} below. 

\begin{theorem}
\label{mains}
The digraph $K^*_{\alpha m}$ admits a resolvable decomposition into directed cycles of length $m$ if and only if $(\alpha, m) \not\in \{ (1,6), (1, 4),$ $(2, 3) \}$. 
\end{theorem}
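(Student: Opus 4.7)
The plan is to derive Theorem \ref{mains} by checking necessity of the three claimed exceptions and then exhausting the remaining parameter space by combining the results surveyed in the introduction with the new Theorem \ref{thm:main}.

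For the necessary direction, I would use that a resolvable directed cycle decomposition is, in particular, a directed cycle decomposition, so any pair $(\alpha, m)$ for which $K^*_{\alpha m}$ admits no decomposition at all into directed $m$-cycles is automatically ruled out of the resolvable problem. By the theorem of Alspach, Gavlas, and \v{S}ajna \cite{AlsGavSaj} recalled in the introduction, the only failures of unrestricted $m$-cycle decomposition (with the divisibility $m \mid \alpha m(\alpha m - 1)$, which is automatic here) occur at $(n,m) \in \{(4,4),(6,3),(6,6)\}$, that is, $(\alpha,m) \in \{(1,4),(2,3),(1,6)\}$. Hence these three pairs cannot admit resolvable decompositions either.

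For sufficiency, I would partition the valid parameter space by $m$ and by the parities of $\alpha$ and $m$. The case $m=3$, $\alpha \neq 2$ is handled by Bermond, Germa, and Sotteau \cite{BerGerSot}; the case $m=4$, $\alpha \neq 1$ by Bennett–Zhang \cite{BenZha} and Adams–Bryant \cite{AdaBry}; the case $m \geq 5$ with $m$ even by Tillson \cite{Til} (for $\alpha=1$, where a directed Hamiltonian decomposition is needed, excluding the exceptional $m=6$) together with Burgess–\v{S}ajna \cite{BurSaj} (for $\alpha \geq 2$); the case $m \geq 5$ with $\alpha, m$ both odd by \cite{BurSaj}; and finally the case $m \geq 5$ odd with $\alpha$ even by the reduction from \cite{BurSaj}, which propagates a resolvable decomposition of $K^*_{2m}$ into directed $m$-cycles to one of $K^*_{\alpha m}$ for every even $\alpha$. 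The required base case $\alpha=2$ is supplied precisely by Theorem \ref{thm:main}. Taking the union, these cases exhaust $\{(\alpha,m) : \alpha \geq 1,\ m \geq 3\} \setminus \{(1,4),(1,6),(2,3)\}$.

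The main obstacle is not in this assembly step: the entire difficulty of the paper is concentrated in establishing Theorem \ref{thm:main}, which upgrades the finite-range verification of Theorem \ref{BurFraSaj} to all odd $m \geq 5$. Once Theorem \ref{thm:main} is in hand, the deduction of Theorem \ref{mains} is a routine bookkeeping exercise: one needs only to confirm that the cited case-ranges tile the non-exceptional region with no gaps and that no listed reference inadvertently covers one of the three exceptional pairs.
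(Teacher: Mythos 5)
Your proposal is correct and follows essentially the same route as the paper, which likewise obtains Theorem \ref{mains} by combining Theorem \ref{thm:main} with the prior results of \cite{AlsGavSaj, AdaBry, BenZha, BerGerSot, BurSaj, Til} and leaves the case-by-case assembly implicit. Your explicit tiling of the parameter space (and the observation that necessity of the three exceptions follows already from the non-resolvable decomposition result of \cite{AlsGavSaj}) matches the intended argument with no gaps.
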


We now provide an outline of this paper. In Section 2, we give key definitions. In Sections 3 and 4, we show that three particular classes of digraphs on $2m$ vertices admit a resolvable decomposition into directed cycles of odd length $m$. These decompositions are then used in Section 5 to construct a resolvable decomposition of $K^*_{2m}$ into directed $m$-cycles. 

\section{Preliminaries}

In this paper, all directed graphs (digraphs for short) are strict, meaning that they do not contain loops or parallel arcs. If $G$ is a digraph (graph), we shall denote its vertex set as $V(G)$ and its arc set (edge set) as $A(G)$ ($E(G)$, respectively). We denote the complete graph on $m$ vertices as $K_m$. The \textit{complete symmetric digraph of order $m$}, denoted $K^*_m$, is the strict digraph on $m$ vertices such that for any two distinct vertices $x$ and $y$, we have $(x, y), (y, x) \in A(K^*_m)$.  The symbol $\vec{C}_m$ denotes the directed cycle on $m$ vertices. We shall denote the length, that is, the number of arcs, of a directed path (dipath for short) $P$ as $\len(P)$. The first vertex of a dipath $P$ is known as its \textit{source} and is denoted $s(P)$, while the last vertex of $P$ is known as its \textit{terminal} and is denoted $t(P)$. The \textit{concatenation} of dipaths $P=v_1v_2\ldots v_n$ and $Q=v_nv_{n+1}\ldots v_m$ is the directed walk $PQ=v_1v_2\ldots v_{n-1}v_nv_{n+1}\ldots v_m$. The digraph that consists of $k$ disjoint copies of a digraph $G$ is denoted $kG$. 

\begin{definition} \rm
\label{def:1}
Let $G$ be a digraph. A set $\{H_1, H_2, \ldots, H_k\}$ of subdigraphs of $G$ is a \textit{decomposition of $G$} if $\{A(H_1), A(H_2), \ldots, A(H_k)\}$ is a partition of $A(G)$. If $G$ admits such a decomposition, we write $G=H_1\oplus H_2 \oplus \ldots \oplus H_k$. 
\end{definition}

\begin{definition}\rm
Let $G$ be a digraph. A \textit{directed 2-factor} of $G$ is a spanning subdigraph of $G$ comprised of disjoint directed cycles of $G$. A \textit{$\vec{C}_m$-factor} is a directed 2-factor of $G$ in which all directed cycles are of length $m$. A \textit{$\vec{C}_m$-factorization} is a decomposition of $G$  into $\vec{C}_m$-factors. 
\end{definition}

If a digraph $G$ admits a $\vec{C}_m$-factorization, then $|V(G)|$ and $|A(G)|$ are necessarily multiples of $m$. Hence, Problem \ref{prob:ini} asks to identify all values of $m$ for which this condition for $K^*_{\alpha m}$ is also sufficient.  

\begin{definition} \rm
The \textit{wreath product} of digraphs $G$ and $H$, denoted $G\wr H$, is the digraph with vertex set $V(G) \times V(H)$ and an arc from $(g_1, h_1)$ to $(g_2, h_2)$ if and only if $(g_1, g_2) \in A(G)$, or $g_1=g_2$ and $(h_1, h_2) \in A(H)$. 
\end{definition} 

\begin{definition} \rm
Let $m$ be a positive integer and $C \subseteq \{1,2, \ldots, \lfloor \frac{m}{2} \rfloor\}$. The \textit{circulant of order $m$}  with \textit{connection set} $C$,  denoted $X(m, C)$, is the graph with vertex set $V(G)=\mathds{Z}_m$ and edge set $E(G)=\{\{x, y\} \, |\, \min\{x-y, m-(x-y)\} \in C\}$, with $x-y$ evaluated modulo $m$. 

Let $D \subseteq \{1,2, \ldots, m-1\}$. The \textit{directed circulant of order $m$} with \textit{connection set} $D$, denoted $\vec{X}(m, D)$, has vertex set $V(G)=\mathds{Z}_m$ and arc set $A(G)=\{(x, y)\, | \, y-x \in D\}$, with $y-x$ evaluated modulo $m$.
\end{definition}

\section{Decomposition of $\vec{X}(m, \{\pm1\}) \wr \overline{K}_2$ and $\vec{X}(m, \{1, 3\}) \wr \overline{K}_2$}

In the following two sections, we will show that three special classes of digraphs admit a $\vec{C}_m$-factorization. First, we introduce some definitions and notation pertaining to these digraphs. 

\begin{notation}
\label{not:2di}
Let $m$ be an odd integer. Let

\medskip
{\centering
  $ \displaystyle
    \begin{aligned} 
 &H_{2m}=\vec{X}(m, \{\pm1\})\, \wr  \,\overline{K}_2,\\ 
 &L_{2m}=\vec{X}(m, \{1, 3\})\, \wr \, \overline{K}_2, \textrm{and} \\
 &G_{2m} =\vec{X}(m, \{1, 3\})\, \wr \, K^*_2.  
    \end{aligned}
    $
\par
\medskip}

\noindent We shall assume that 
\begin{center}
$V(\overline{K}_2)=V(K^*_2)=\{x, y\}$ and $V(H_{2m})=V(L_{2m})=V(G_{2m})=\{ x_a, y_b\, |\ a,b \in \mathds{Z}_m\}$ 
 \end{center}
 where $ x_a=(a,x)$ and $y_b=(b, y)$.
\end{notation}

In Figure \ref{fig:defini}, we illustrate the digraph  $L_{22}$. Note that, in all figures, the edges are assumed to be arcs oriented from left to right. The orientation of the vertical arcs will be clear from the context. 

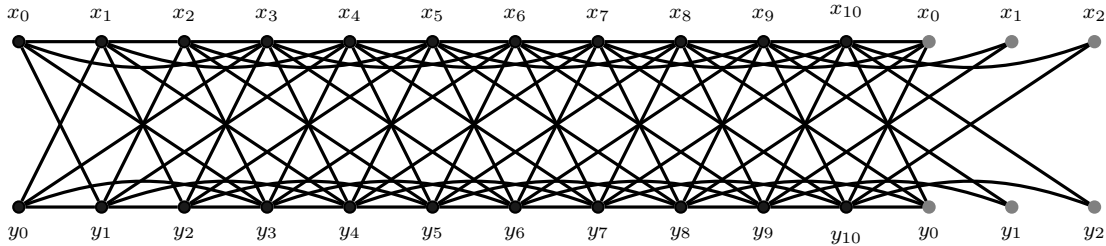
\begin{figure} [htpb] 

\begin{tikzpicture}[thick,  every node/.style={circle,draw=black,fill=black!90, inner sep=1.5}, scale=1.1]
 \node (x0) at (0.0,2.0) [label=above:$\scriptstyle x_0 $] {};
 \node (x1) at (1.0,2.0) [label=above:$\scriptstyle x_1 $] {};
 \node (x2) at (2.0,2.0) [label=above:$\scriptstyle x_2$] {};
 \node (x3) at (3.0,2.0) [label=above:$\scriptstyle x_3 $] {};
 \node (x4) at (4.0,2.0) [label=above:$\scriptstyle x_4 $] {};
 \node (x5) at (5.0,2.0) [label=above:$\scriptstyle x_5 $] {};
 \node (x6) at (6.0,2.0) [label=above:$\scriptstyle x_6 $] {};
 \node (x7) at (7.0,2.0) [label=above:$\scriptstyle x_7 $] {};
 \node (x8) at (8.0,2.0) [label=above:$\scriptstyle x_8 $] {};
 \node (x9) at (9.0,2.0) [label=above:$\scriptstyle x_9 $] {};
 \node (x10) at (10.0,2.0) [label=above:$\scriptstyle x_{10} $] {};
 \node (x11) at (11,2.0)[draw=gray, fill=gray] [label=above:$\scriptstyle x_{0} $] {};
 \node (x12) at (12,2.0)[draw=gray, fill=gray][label=above:$\scriptstyle x_{1} $] {};
 \node (x13) at (13,2.0) [draw=gray, fill=gray][label=above:$\scriptstyle x_{2} $] {};
\node(y0) at (0,0)  [label=below:$\scriptstyle y_0$]{};
\node(y1) at (1,0)  [label=below:$\scriptstyle y_1$]{};
\node(y2) at (2,0)  [label=below:$\scriptstyle y_2$]{};
\node(y3) at (3,0)  [label=below:$\scriptstyle y_3$]{};
\node(y4) at (4,0)  [label=below:$\scriptstyle y_4$]{};
\node(y5) at (5,0)  [label=below:$\scriptstyle y_5$]{};
\node(y6) at (6,0)  [label=below:$\scriptstyle y_6$]{};
\node(y7) at (7,0)  [label=below:$\scriptstyle y_7$]{};
\node(y8) at (8,0)  [label=below:$\scriptstyle y_8$]{};
\node(y9) at (9,0)  [label=below:$\scriptstyle y_9$]{};
\node(y10) at (10,0)  [label=below:$\scriptstyle y_{10}$]{};
\node(y11) at (11,0)  [draw=gray, fill=gray][label=below:$\scriptstyle y_{0}$]{};
\node(y12) at (12,0)  [draw=gray, fill=gray][label=below:$\scriptstyle y_{1}$]{};
\node(y13) at (13,0) [draw=gray, fill=gray] [label=below:$\scriptstyle y_{2}$]{};

 \path[every node/.style={font=\sffamily\small}]
  (x0) edge [color2, very thick]   (x1)
  (x1) edge [color2, very thick]   (x2)
  (x2) edge [color2, very thick]   (x3)
  (x3) edge [color2, very thick]  (x4)
  (x4) edge [color2, very thick]  (x5)
  (x5) edge [color2, very thick]   (x6)
  (x6) edge [color2, very thick]   (x7)
  (x7) edge [color2, very thick] (x8)
  (x8) edge [color2, very thick]   (x9)
  (x9) edge [color2, very thick]   (x10)
  (x10) edge [color2, very thick]   (x11)
  (y0) edge [color2, very thick]   (y1)
  (y1) edge [color2, very thick]    (y2)
  (y2) edge [color2, very thick]  (y3)
  (y3) edge [color2, very thick]   (y4)
  (y4) edge [color2, very thick]   (y5)
  (y5) edge [color2, very thick]   (y6)
  (y6) edge [color2, very thick]    (y7)
  (y7) edge [color2, very thick]   (y8)
  (y8) edge [color2, very thick]  (y9)
  (y9) edge [color2, very thick]   (y10)
  (y10) edge [color2, very thick]  (y11)

  (x0) edge [color2, very thick]    (y1)
  (x1) edge [color2, very thick]    (y2)
  (x2) edge [color2, very thick]   (y3)
  (x3) edge [color2, very thick]   (y4)
  (x4) edge [color2, very thick]   (y5)
  (x5) edge [color2, very thick]   (y6)
  (x6) edge [color2, very thick]   (y7)
  (x7) edge [color2, very thick]   (y8)
  (x8) edge [color2, very thick]   (y9)
  (x9) edge [color2, very thick]   (y10)
  (x10) edge [color2, very thick]   (y11)
  
  (y0) edge [color2, very thick]   (x1)
  (y1) edge [color2, very thick]    (x2)
  (y2) edge [color2, very thick]    (x3)
  (y3) edge [color2, very thick]    (x4)
  (y4) edge [color2, very thick]    (x5)
  (y5) edge [color2, very thick]    (x6)
  (y6) edge [color2, very thick]    (x7)
  (y7) edge [color2, very thick]    (x8)
  (y8) edge [color2, very thick]    (x9)
  (y9) edge [color2, very thick]    (x10)
  (y10) edge [color2, very thick]    (x11)

  (y0) edge [color2, very thick]   (x3)
  (y1) edge [color2, very thick]   (x4)
  (y2) edge [color2, very thick]   (x5)
  (y3) edge [color2, very thick]   (x6)
  (y4) edge [color2, very thick]   (x7)
  (y5) edge [color2, very thick]   (x8)
  (y6) edge [color2, very thick]   (x9)
  (y7) edge [color2, very thick]   (x10)
  (y8) edge [color2, very thick]   (x11)
  (y9) edge [color2, very thick]   (x12)
  (y10) edge [color2, very thick]   (x13)

  (x0) edge [color2, very thick]   (y3)
   (x1) edge [color2, very thick]   (y4)
    (x2) edge [color2, very thick]   (y5)
     (x3) edge [color2, very thick]   (y6)
      (x4) edge [color2, very thick]   (y7)
       (x5) edge [color2, very thick]   (y8)
        (x6) edge [color2, very thick]   (y9)
         (x7) edge [color2, very thick]   (y10)
          (x8) edge [color2, very thick]   (y11)
           (x9) edge [color2, very thick]   (y12)
            (x10) edge [color2, very thick]   (y13)

  (x0) edge [color2, very thick, bend right=20]   (x3)
   (x1) edge [color2, very thick, bend right=20]   (x4)
  (x2) edge [color2, very thick, bend right=20]   (x5)
  (x3) edge [color2, very thick, bend right=20]   (x6)
  (x4) edge [color2, very thick, bend right=20]   (x7)
  (x5) edge [color2, very thick, bend right=20]   (x8)
  (x6) edge [color2, very thick, bend right=20]   (x9)
  (x7) edge [color2, very thick, bend right=20]   (x10)
  (x8) edge [color2, very thick, bend right=20]   (x11)
  (x9) edge [color2, very thick, bend right=20]   (x12)
  (x10) edge [color2, very thick, bend right=20]   (x13)
   (y0) edge [color2, very thick, bend left=20]   (y3)
   (y1) edge [color2, very thick, bend left=20]   (y4)
  (y2) edge [color2, very thick, bend left=20]   (y5)
  (y3) edge [color2, very thick, bend left=20]   (y6)
  (y4) edge [color2, very thick, bend left=20]   (y7)
  (y5) edge [color2, very thick, bend left=20]   (y8)
  (y6) edge [color2, very thick, bend left=20]   (y9)
  (y7) edge [color2, very thick, bend left=20]   (y10)
  (y8) edge [color2, very thick, bend left=20]   (y11)
  (y9) edge [color2, very thick, bend left=20]   (y12)
  (y10) edge [color2, very thick, bend left=20]   (y13);
  
\end{tikzpicture}
\caption{The digraph $L_{22}$. Arcs are oriented from left to right.}
\label{fig:defini}
\end{figure}
\begin{sloppypar}
When constructing $L_{2m}$ and $G_{2m}$, we chose $\vec{X}(m, \{1,3\})$ as the first factor in the wreath product. The digraph  $\vec{X}(m, \{1,2\})$ would be simpler, however, it can be shown that ${\vec{X}(m, \{1,2\})\, \wr \, \overline{K}_2}$ does not admit a $\vec{C}_m$-factorization. 
\end{sloppypar}

Let $G \in \{H_{2m}, L_{2m}, G_{2m}\}$. Arcs of $G$ of the form $(x_a, x_b)$, $(x_a, y_b)$, $(y_a, y_b)$, and $(y_a, x_b)$ are said to be of \textit{difference $b-a$} (computed modulo $m$). Let $W=v_0 v_1 \ldots v_n$ be a directed walk in $G$, with consecutive arcs of difference $d_0, d_1, \ldots, d_{n-1}$. If  $t=d_0+d_1+\ldots+d_{n-1}$, then we say that the arcs of $W$ \textit{sum up to $t$}. A \textit{type-$k$} cycle of $G$ (for $k$ a positive integer) is a directed $m$-cycle whose arcs sum up to $km$. It follows from the definition of $L_{2m}$ and $G_{2m}$ that the arcs of a directed $m$-cycle of these digraphs sum up to at most $3m$. 

In Lemmas \ref{lem:1} and \ref{lem:2}, and Proposition \ref{thm:cand}, we will construct a $\vec{C}_m$-factorization of $H_{2m}, L_{2m}$, and $G_{2m}$, respectively, containing four, four, and five $\vec{C}_m$-factors, respectively.

\begin{lemma}
\label{lem:1}
Let $m \geqslant 5$ be an odd integer. The digraph $H_{2m}$ admits a $\vec{C}_m$-factorization. 
\end{lemma}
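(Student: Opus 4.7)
The plan is to exhibit four $\vec{C}_m$-factors of $H_{2m}$ by encoding each factor as a $\{x,y\}$-valued function on $\mathbb{Z}_m$, and then reducing the construction problem to that of finding a closed walk in a small auxiliary graph. The first observation is that, because $m$ is odd, every directed $m$-cycle in $H_{2m}$ uses only arcs of difference $+1$ or only arcs of difference $-1$: a cycle with $p$ arcs of difference $+1$ and $n$ of difference $-1$ satisfies $p + n = m$ and $p - n \equiv 0 \pmod{m}$; since $m$ is odd, $p - n$ is also odd, so $p - n = \pm m$ and one of $p, n$ is zero. Guided by this, for any $f : \mathbb{Z}_m \to \{x, y\}$ I let $F(f)$ be the spanning subdigraph of $H_{2m}$ consisting of the two vertex-disjoint directed $m$-cycles $f(0)_0\, f(1)_1\, \cdots\, f(m-1)_{m-1}$ (using only $+1$-arcs) and $\bar f(0)_0\, \bar f(m-1)_{m-1}\, \cdots\, \bar f(1)_1$ (using only $-1$-arcs), where $\bar x = y$ and $\bar y = x$. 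Each $F(f)$ is plainly a $\vec{C}_m$-factor.

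A short arc count then shows that four such factors $F(f_1), \ldots, F(f_4)$ partition $A(H_{2m})$ if and only if, for every $a \in \mathbb{Z}_m$, the four ordered pairs $(f_i(a), f_i(a+1))$ for $i = 1, 2, 3, 4$ form a permutation of $\{x, y\}^2$; the analogous $-1$-arc condition coincides with this one after an index shift. Writing $S_a = \{i \in \{1, 2, 3, 4\} : f_i(a) = x\}$, the requirement becomes $|S_a| = 2$ and $|S_a \cap S_{a+1}| = 1$ for every $a \in \mathbb{Z}_m$.

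It therefore suffices to construct a closed walk $S_0, S_1, \ldots, S_{m-1}, S_0$ of length $m$ in the Johnson graph $J(4, 2)$, whose vertices are the six $2$-subsets of $\{1, 2, 3, 4\}$ and whose edges join subsets meeting in a single element. This graph is the octahedron and contains the triangle $\{1,2\}, \{2,3\}, \{1,3\}$. For any odd $m \geq 5$ I obtain the required walk by traversing this triangle once and then performing $(m-3)/2$ back-and-forth steps along one of its edges; the resulting sequence $S_0, \ldots, S_{m-1}$ determines the functions $f_i$, and hence the four factors.

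The delicate step is verifying the ``if and only if'' in the second paragraph, which demands an explicit arc-by-arc check that each arc of $H_{2m}$ lies in exactly one $F(f_i)$. I expect the use of such mixed (one-$+1$-cycle, one-$-1$-cycle) factors to be essentially forced here: any decomposition built only from $+1$-only or $-1$-only factors runs into a parity obstruction when $m$ is odd, so the reduction to closed walks in the octahedron appears unavoidable.
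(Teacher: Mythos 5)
Your argument is correct, and it takes a genuinely different route from the paper. The paper simply writes down eight explicit directed $m$-cycles $C^0,\dots,C^7$ (one ``$+1$-cycle'' paired with one ``$-1$-cycle'' in each factor) and leaves the reader to verify that every arc of $H_{2m}$ is covered exactly once. You instead prove a structural fact first --- since $m$ is odd, the parity argument $p+n=m$, $p-n\equiv 0 \pmod m$ forces every directed $m$-cycle of $H_{2m}$ to use arcs of a single difference --- and then parametrize the candidate factors by functions $f:\mathbb{Z}_m\to\{x,y\}$. Your reduction of the arc-partition condition to ``$(f_i(a),f_i(a+1))_{i=1}^4$ is a permutation of $\{x,y\}^2$ for every $a$,'' and hence to $|S_a|=2$, $|S_a\cap S_{a+1}|=1$, is a clean bijection count (the $-1$-arc condition really does collapse onto the $+1$-arc condition via $(u,v)\mapsto(\bar v,\bar u)$), and the closed walk of odd length $m\geqslant 5$ in $J(4,2)$ exists exactly as you describe (triangle plus $(m-3)/2$ back-and-forths). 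The step you flag as delicate is in fact routine once set up, and your side remark about the parity obstruction to using two same-sign cycles in one factor is also correct. What your approach buys is that the final verification is trivial and the construction is explained rather than exhibited; what the paper's approach buys is brevity and uniformity with the (necessarily ad hoc) constructions for $L_{2m}$ and $G_{2m}$ in the following sections, where no such clean reduction is available. Incidentally, the paper's $F_0=C^0\cup C^1$ is exactly your $F(f)$ with $f$ constant equal to $x$, so the paper's factors are a particular instance of your family, as your structural observation predicts they must be.
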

\begin{proof}
We construct eight directed $m$-cycles of $H_{2m}$ as follows:

\begin{multicols}{2}
\medskip
{\centering
  $ \displaystyle
    \begin{aligned} 
&C^0=x_0 x_1x_2x_3x_4x_5\ldots x_{m-3} x_{m-2} x_{m-1} x_0; \\
&C^2=x_0 y_1 y_2 x_3y_4x_5 \ldots y_{m-3} x_{m-2} y_{m-1} x_0; \\
&C^4=y_0 y_1x_2 y_3x_4y_5\ldots x_{m-3} y_{m-2} x_{m-1} y_0; \\
&C^6=y_0 x_1 y_2 y_3y_4y_5 \ldots y_{m-3} y_{m-2} y_{m-1} y_0; \\
    \end{aligned}
  $ 
\par}
\medskip
\medskip
{\centering
  $ \displaystyle
    \begin{aligned} 
&C^1= y_0  y_{m-1} y_{m-2} y_{m-3} \ldots y_5y_4 y_3  y_2 y_1 y_0;\\    
&C^3=y_0  x_{m-1} y_{m-2} x_{m-3} \ldots y_5x_4y_3  x_2 x_1 y_0;\\
&C^5= x_0  y_{m-1} x_{m-2} y_{m-3} \ldots x_5y_4 x_3 y_2 x_1x_0;\\
&C^7=x_0 x_{m-1} x_{m-2} x_{m-3} \ldots x_5x_4x_3 x_2 y_1 x_0.\\
    \end{aligned}
  $ 
\par}
\medskip
\end{multicols}
It can be verified that, for each even $i$, the directed $m$-cycles $C^i$ and $C^{i+1}$ are disjoint. Each arc of $H_{2m}$ occurs precisely once in these eight directed $m$-cycles. Therefore, the set $\{C^0 \cup C^1, C^2\cup C^3, C^4\cup C^5, C^6\cup  C^7\}$ is a $\vec{C}_m$-factorization of $H_{2m}$. \end{proof}

We proceed by constructing a $\vec{C}_m$-factorization of $L_{2m}$ for all odd $m\geqslant 13$ that are not divisible by 3. 

\begin{lemma}
\label{lem:2}
Let $m\geqslant 13$ be an odd integer. The digraph $L_{2m}$ admits a $\vec{C}_m$-factorization if and only if $3 \nmid \,m$.
\end{lemma}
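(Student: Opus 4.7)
My plan is to prove both directions. For necessity, I would use a difference-counting argument. A directed $m$-cycle $C$ in $L_{2m}$ with $a$ arcs of difference $1$ and $b$ arcs of difference $3$ satisfies $a + b = m$ and $a + 3b \equiv 0 \pmod m$; subtracting gives $2b \equiv 0 \pmod m$, so since $m$ is odd, $b \in \{0, m\}$. Every directed $m$-cycle is therefore either of ``type $1$'' (all arcs of difference $1$) or of ``type $3$'' (all arcs of difference $3$). Any type-$3$ cycle of length $m$ traverses columns in the orbit $0, 3, 6, \ldots \pmod m$, which has length $m$ only when $\gcd(3,m) = 1$. Counting the $4m$ arcs of difference $3$ against the $m$ arcs contributed per type-$3$ cycle, any factorization must contain exactly four type-$3$ cycles, forcing $3 \nmid m$.

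For sufficiency, assume $3 \nmid m$ and $m \geqslant 13$. For each $i \in \mathbb{Z}_m$, the four difference-$1$ arcs between columns $i$ and $i+1$ must be split one-per-factor across the four factors, so each factor uses exactly one such arc per position and thus contains exactly one type-$1$ cycle; symmetrically, exactly one type-$3$ cycle. Each factor is therefore specified by a function $\epsilon \colon \mathbb{Z}_m \to \{x,y\}$: its type-$1$ cycle visits $\epsilon(i)_i$ in column $i$ for $i = 0, 1, \ldots, m-1$, while its type-$3$ cycle visits the opposite vertex $\overline{\epsilon(i)}_i$ along the orbit of multiples of $3$. Assembling the four functions into a $4 \times m$ matrix $E$ with entries in $\{x,y\}$, the requirement that the four factors partition $A(L_{2m})$ translates to: every column of $E$ is balanced (two $x$'s and two $y$'s) and every pair of columns at index-distance $1$ or $3$ is at Hamming distance exactly $2$.

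The six balanced $4$-tuples partition into three complementation classes, and two balanced tuples are at Hamming distance $2$ if and only if they lie in different classes. So the construction reduces to finding a class sequence $\alpha \colon \mathbb{Z}_m \to \{1, 2, 3\}$ with $\alpha_i \neq \alpha_{i+1}$ and $\alpha_i \neq \alpha_{i+3}$ for every $i$. Setting $\beta_i = \alpha_{i+1} - \alpha_i \in \{\pm 1\} \subset \mathbb{Z}_3$, the required conditions become $\sum_i \beta_i \equiv 0 \pmod 3$ (cyclic closure) together with the stipulation that no three consecutive $\beta_i$ are equal. Once such a sequence is exhibited, any of the $2^m$ sign choices for the columns of $E$ yields an admissible orientation and hence eight explicit $\vec{C}_m$'s forming the factorization.

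The main obstacle is the cyclic closure. For $m$ odd, no purely periodic pattern in $\beta$ satisfies all constraints simultaneously, so I would produce $\alpha$ by case analysis on $m \bmod 6 \in \{1, 5\}$, using a length-$6$ repeating block augmented by a short corrector block that absorbs the residue. Verifying that the corrector fits without creating a run of three equal $\beta_i$ (internally or across the wraparound) is precisely where the hypothesis $m \geqslant 13$ enters; the cases left uncovered are absorbed by Theorem~\ref{BurFraSaj}.
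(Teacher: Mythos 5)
Your proposal is correct in both directions but proves sufficiency by a genuinely different and more conceptual route than the paper. The paper also classifies the directed $m$-cycles of $L_{2m}$ into all-difference-$1$ and all-difference-$3$ cycles (its necessity argument is essentially your parity computation), but it then establishes sufficiency by writing down, for each residue $p\in\{1,5\}$ of $m$ modulo $6$, eight explicit dipaths per factor and concatenating them into the two cycles of each $\vec{C}_m$-factor; the verification that the four factors partition $A(L_{2m})$ is left as a long routine check. You instead observe that a factor consisting of one type-$1$ and one type-$3$ cycle is encoded by a map $\epsilon:\mathds{Z}_m\to\{x,y\}$, that the partition condition on four such factors is exactly ``all columns of the $4\times m$ matrix balanced and columns at index-distance $1$ or $3$ at Hamming distance $2$,'' and that this reduces to a proper $3$-colouring of the circulant $X(m,\{1,3\})$, i.e.\ a cyclic word $\beta\in\{1,2\}^m$ with no three consecutive equal letters and digit sum divisible by $3$. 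This buys transparency (the arc-disjointness check becomes a two-line counting argument rather than an appendix of figures) and produces $2^m$ factorizations at once; the paper's approach buys nothing here that yours does not, since its explicit cycles are exactly the all-difference-$1$/all-difference-$3$ cycles your framework parametrizes. Your reduction steps all check out: balanced columns at Hamming distance $2$ do realize each of the four patterns exactly once, the six balanced $4$-tuples do split into three complementation classes with distance $2$ precisely across classes, and any choice of class representatives works.

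Two caveats. First, your closing appeal to Theorem~\ref{BurFraSaj} is not available: that theorem concerns $K^*_{2m}$, whereas Lemma~\ref{lem:2} is a statement about the specific spanning subdigraph $L_{2m}$ and must be proved for \emph{every} odd $m\geqslant 13$ with $3\nmid m$ on its own. Fortunately no cases need to be ``absorbed'': the required word $\beta$ exists uniformly, e.g.\ concatenate $(m-9)/2$ copies of the block $(1,2)$ with three copies of the block $(1,2,2)$, giving length $m$, maximal run $2$ (blocks end in $2$ and begin with $1$, so runs never merge, including across the wraparound), and digit sum $3\cdot\frac{m-9}{2}+15\equiv 0 \pmod 3$ --- so no case analysis on $m\bmod 6$ and no corrector blocks are needed. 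Second, your assertion that the four difference-$1$ arcs between columns $i$ and $i+1$ ``must'' be split one per factor is false as a claim of necessity (a factor could consist of two type-$1$ cycles, as in $x_0x_1\cdots x_{m-1}x_0$ together with $y_0y_1\cdots y_{m-1}y_0$); it is harmless here only because you are free to impose it as a design choice in an existence proof, and you should phrase it as such.
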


\begin{proof} First assume that $3 |m$. Let $C$ be a directed $m$-cycle of $L_{2m}$ containing an arc of difference 3. Observe that $C$ cannot be a type-1 directed $m$-cycle. Now suppose that $C$ is a type-2 directed $m$-cycle comprised of $k_1$ arcs of difference 1 and $k_2$ arcs of difference 3. Then $k_1+k_2=m$ and $k_1+3k_2=2m$, implying that $k_1 \not \equiv k_2\ (\textrm{mod}\ 2)$ and $k_1\equiv k_2\ (\textrm{mod}\ 2)$, respectively --- a contradiction. Hence, if $C$ contains an arc of difference 3, then all arcs of $C$ are of difference $3$. However, if all arcs of $C$ are of difference 3 then $C$ has a repeated vertex --- also a contradiction. It follows that $L_{2m}$ does not admit a $\vec{C}_m$-factorization.

Conversely, assume that $3 \nmid m$. We construct a $\vec{C}_m$-factorization of $L_{2m}$ with four $\vec{C}_m$-factors, each consisting of one type-1 directed $m$-cycle and one type-3 directed $m$-cycle. Let $m=p+6k$ with $p\in \{1,5\}$ and $k\geqslant 2$. 

\noindent {\bf Case 1:} $p=1$. To construct our first $\vec{C}_m$-factor, we start by building eight dipaths as follows (see Figure \ref{fig2}): 
\begin{multicols}{2}
\medskip
{\centering
  $ \displaystyle
    \begin{aligned} 
&W_0=y_0y_1 y_2 x_3 x_4 x_5y_{6} y_{7} x_{8} x_{9} x_{10}y_{11} x_{12} y_{13};\\
&X_0=x_2 y_5 y_{8} x_{11}x_{14};\\
&Y_0=x_1 y_4 x_{7} y_{10}x_{13}; \\
&Z_0=x_0 y_3 x_{6} y_{9}y_{12}x_{15};
    \end{aligned}
  $ 
\par}
\medskip
\medskip
{\centering
  $ \displaystyle
    \begin{aligned} 
&Q_0=y_{13}y_{14} y_{15} \ldots y_{m-2}y_{m-1}y_0;\\
&R_0=x_{14}x_{17} x_{20} \ldots x_{m-5}x_{m-2}x_1;\\
&S_0=x_{13}x_{16} x_{19} \ldots x_{m-6}x_{m-3}x_0; \\
&T_0=x_{15}x_{18} x_{21} \ldots x_{m-4}x_{m-1}x_2.    
    \end{aligned}
  $ 
\par}
\medskip
\end{multicols}

\begin{figure}[h!]
\begin{FlushLeft}
\begin{subfigure}[l]{1 \textwidth}
\begin{tikzpicture}[thick,  every node/.style={circle,draw=black,fill=black!90, inner sep=1.2}, scale=0.65]
 \node (x0) at (0.0,1.0) [label=above:$\scriptstyle x_0 $] {};
 \node (x1) at (1.0,1.0) [label=above:$\scriptstyle x_1 $] {};
 \node (x2) at (2.0,1.0) [label=above:$\scriptstyle x_2$] {};
 \node (x3) at (3.0,1.0) [label=above:$\scriptstyle x_3 $] {};
 \node (x4) at (4.0,1.0) [label=above:$\scriptstyle x_4 $] {};
 \node (x5) at (5.0,1.0) [label=above:$\scriptstyle x_5 $] {};
 \node (x6) at (6.0,1.0) [label=above:$\scriptstyle x_6 $] {};
 \node (x7) at (7.0,1.0) [label=above:$\scriptstyle x_7 $] {};
 \node (x8) at (8.0,1.0) [label=above:$\scriptstyle x_8 $] {};
 \node (x9) at (9.0,1.0) [label=above:$\scriptstyle x_9 $] {};
 \node (x10) at (10.0,1.0) [label=above:$\scriptstyle x_{10} $] {};
 \node (x11) at (11,1.0) [label=above:$\scriptstyle x_{11} $] {};
 \node (x12) at (12,1.0)[label=above:$\scriptstyle x_{12} $] {};
 \node (x13) at (13,1.0) [label=above:$\scriptstyle x_{13} $] {};
 \node (x14) at (14,1.0) [label=above:$\scriptstyle x_{14} $] {};
 \node (x15) at (15,1.0) [label=above:$\scriptstyle x_{15} $] {};
 \node (x16) at (16,1.0) [label=above:$\scriptstyle x_{16} $] {};
 \node (x17) at (17,1.0) [label=above:$\scriptstyle x_{17} $] {};
 \node (x18) at (18,1.0) [label=above:$\scriptstyle x_{18} $] {};
 \node (x19) at (19,1.0) [label=above:$\scriptstyle x_{19} $] {};
  \node (x20) at (20,1.0) [label=above:$\scriptstyle x_{20} $] {};
 \node (x21) at (21,1.0) [label=above:$\scriptstyle x_{21} $] [label=right:$\ldots$]{};
\node(y0) at (0,0)  [label=below:$\scriptstyle y_0$]{};
\node(y1) at (1,0)  [label=below:$\scriptstyle y_1$]{};
\node(y2) at (2,0)  [label=below:$\scriptstyle y_2$]{};
\node(y3) at (3,0)  [label=below:$\scriptstyle y_3$]{};
\node(y4) at (4,0)  [label=below:$\scriptstyle y_4$]{};
\node(y5) at (5,0)  [label=below:$\scriptstyle y_5$]{};
\node(y6) at (6,0)  [label=below:$\scriptstyle y_6$]{};
\node(y7) at (7,0)  [label=below:$\scriptstyle y_7$]{};
\node(y8) at (8,0)  [label=below:$\scriptstyle y_8$]{};
\node(y9) at (9,0)  [label=below:$\scriptstyle y_9$]{};
\node(y10) at (10,0)  [label=below:$\scriptstyle y_{10}$]{};
\node(y11) at (11,0)  [label=below:$\scriptstyle y_{11}$]{};
\node(y12) at (12,0)  [label=below:$\scriptstyle y_{12}$]{};
\node(y13) at (13,0)  [label=below:$\scriptstyle y_{13}$]{};
\node(y14) at (14,0)  [label=below:$\scriptstyle y_{14}$]{};
\node(y15) at (15,0)  [label=below:$\scriptstyle y_{15}$]{};
\node(y16) at (16,0)  [label=below:$\scriptstyle y_{16}$]{};
 \node (y17) at (17,0.0) [label=below:$\scriptstyle y_{17} $] {};
 \node (y18) at (18,0.0) [label=below:$\scriptstyle y_{18} $] {};
 \node (y19) at (19,0.0) [label=below:$\scriptstyle y_{19} $] {};
 \node (y20) at (20,0.0) [label=below:$\scriptstyle y_{20} $] {};
 \node (y21) at (21,0.0) [label=below:$\scriptstyle y_{21} $] [label=right:$\ldots$]{};

 \path[every node/.style={font=\sffamily\small}]
 
 (x0) edge [very thick, color7](y3)
 (y3) edge [very thick, color7](x6)
 (x6) edge [very thick, color7](y9)
 (y9) edge [very thick, color7, bend left=20](y12)
 (y12) edge [very thick, color7](x15)
 (x15) edge [very thick, color7, bend right=20](x18)
 (x18) edge [very thick, color7,  bend right=20](x21)
 
 (x1) edge [very thick, color5](y4)
 (y4) edge [very thick, color5](x7)
 (x7) edge [very thick, color5](y10)
(y10) edge [very thick, color5](x13)
(x13) edge [very thick, color5,  bend right=20](x16)
(x16) edge [very thick, color5, bend right=20](x19)

(x2) edge [very thick, color4](y5)
(y5) edge [very thick, color4, bend left=20](y8)
(y8) edge [very thick, color4](x11)
(x11) edge [very thick, color4, bend right=20](x14)
(x14) edge [very thick, color4,  bend right=20](x17)
(x17) edge [very thick, color4, bend right=20](x20)

(y0) edge [very thick, color3](y1)
(y1) edge [very thick, color3](y2)
(y2) edge [very thick, color3](x3)
(x3) edge [very thick, color3](x4)
(x4) edge [very thick, color3](x5)
(x5) edge [very thick, color3](y6)
(y6) edge [very thick, color3] (y7)
(y7) edge [very thick, color3] (x8)
(x8) edge [very thick, color3](x9)
(x9) edge [very thick, color3](x10)
(x10) edge [very thick, color3](y11)
(y11) edge [very thick, color3](x12)
(x12) edge [very thick, color3](y13)
(y13) edge [very thick, color3](y14)
(y14) edge [very thick, color3](y15)
(y15) edge [very thick, color3](y16)
(y16) edge [very thick, color3](y17)
(y17) edge [very thick, color3](y18)
(y18) edge [very thick, color3](y19);

\end{tikzpicture}
\caption{The dipaths $W_0Q_0$ (red),  $X_0R_0$ (dark blue), $Y_0S_0$ (light blue), and $Z_0T_0$ (green).}
\end{subfigure}
\begin{subfigure}[l]{1.0\textwidth}
\hfill
\begin{tikzpicture}[thick,  every node/.style={circle,draw=black,fill=black!90, inner sep=1.2}, scale=1]
 \node (x0) at (0.0,1.0) [label={[label distance=-0.2cm]above:$\scriptstyle x_{m-6}$}][label=left:$\ldots$] {};
 \node (x1) at (1.0,1.0) [label={[label distance=-0.2cm]above:$\scriptstyle x_{m-5}$}]{};
 \node (x2) at (2.0,1.0)[label={[label distance=-0.2cm]above:$\scriptstyle x_{m-4}$}]{};
 \node (x3) at (3.0,1.0)[label={[label distance=-0.2cm]above:$\scriptstyle x_{m-3}$}]{};
 \node (x4) at (4.0,1.0) [label={[label distance=-0.2cm]above:$\scriptstyle x_{m-2}$}]{};
 \node (x5) at (5.0,1.0)[label={[label distance=-0.2cm]above:$\scriptstyle x_{m-1}$}]{};
 \node (x6) at (6.0,1.0)[draw=gray, fill=gray]  [label=above:$\scriptstyle x_{0}  $] {};
 \node (x7) at (7.0,1.0) [draw=gray, fill=gray] [label=above:$\scriptstyle  x_{1}  $] {};
 \node (x8) at (8.0,1.0) [draw=gray, fill=gray] [label=above:$\scriptstyle  x_{2} $] {};
\node(y0) at (0,0)  [label={[label distance=-0.2cm]below:$\scriptstyle y_{m-6}$}][label=left:$\ldots$] {};
\node(y1) at (1,0)  [label={[label distance=-0.2cm]below:$\scriptstyle y_{m-5}$}]{};
\node(y2) at (2,0)  [label={[label distance=-0.2cm]below:$\scriptstyle y_{m-4}$}]{};
\node(y3) at (3,0) [label={[label distance=-0.2cm]below:$\scriptstyle y_{m-3}$}]{};
\node(y4) at (4,0)  [label={[label distance=-0.2cm]below:$\scriptstyle y_{m-2}$}]{};
\node(y5) at (5,0)  [label={[label distance=-0.2cm]below:$\scriptstyle y_{m-1}$}]{};
\node(y6) at (6,0)  [draw=gray, fill=gray]   [label=below:$\scriptstyle y_{0}$]{};
\node(y7) at (7,0) [draw=gray, fill=gray]  [label=below:$\scriptstyle y_{1}$]{};
\node(y8) at (8,0) [draw=gray, fill=gray]  [label=below:$\scriptstyle y_{2}$]{};

 \path[every node/.style={font=\sffamily\small}]
 
  (x1) edge [very thick, color4, bend right=20](x4)
 (x4) edge [very thick, color4,  bend right=20](x7)
 
 (x2) edge [very thick, color7,  bend right=20](x5)
 (x5) edge [very thick, color7,  bend right=20](x8)
 
(x0) edge [very thick, color5,  bend right=20](x3)
(x3) edge [very thick, color5,  bend right=20](x6)
(y0) edge [very thick, color3](y1)
(y1) edge [very thick, color3](y2)
(y2) edge [very thick, color3](y3)
(y3) edge [very thick, color3](y4)
(y4) edge [very thick, color3](y5)
(y5) edge [very thick, color3](y6);

\end{tikzpicture}

\caption{The last six arcs of $Q_0$ (red), and the last three arcs of  $R_0$ (dark blue), $S_0$ (light blue), and $T_0$ (green).}
\label{fig:m5.1.2}
\end{subfigure}
\end{FlushLeft}
\caption{The key dipaths in the construction of $F_0$ for $p=1$.}
\label{fig2}
\end{figure}
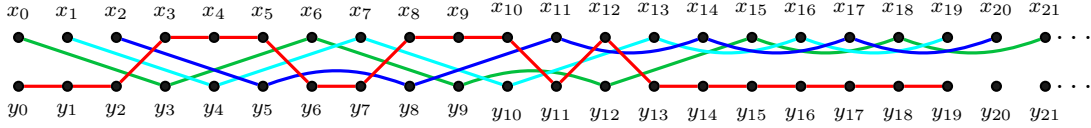
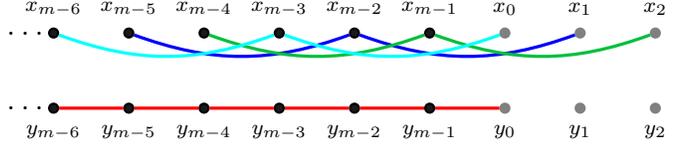
\pagebreak
The dipath $Q_0$ is of length $m-13$ while the dipaths in $\{R_0, S_0, T_0\}$ are each of length $2(k-2)$. We also point out that the dipaths in $\{Q_0, R_0, S_0, T_0\}$ are pairwise disjoint. We can then form the following directed $m$-cycles:

\begin{center}
$C^0=W_0Q_0$ and $C^1=X_0R_0Y_0S_0Z_0T_0$. 
\end{center}  

\noindent It can be verified that $C^0$ and $C^1$ are of length $m=1+6k$ and are disjoint.  Hence $F_0=C^0\cup C^1$ is a $\vec{C}_m$-factor. 

 We proceed by constructing the following eight dipaths: 

\begin{multicols}{2}
\medskip
{\centering
  $ \displaystyle
    \begin{aligned} 
&W_1=x_0 x_1 x_2 y_3 x_4 y_5 y_{6} x_{7}  x_{8} y_{9} y_{10} x_{11} y_{12} x_{13};\\
&X_1=y_2 x_5 y_{8} y_{11} y_{14};\\
&Y_1=y_1 y_4 y_{7} x_{10} y_{13}; \\
&Z_1=y_0 x_3 x_{6} x_{9} x_{12} y_{15};
    \end{aligned}
  $ 
\par}
\medskip
\medskip
{\centering
  $ \displaystyle
    \begin{aligned} 
&Q_1=x_{13}x_{14} x_{15} \ldots x_{m-2} x_{m-1} x_0;\\
&R_1=y_{14} y_{17}y_{20} \ldots y_{m-5} y_{m-2}y_{1};\\
&S_1=y_{13}y_{16} y_{19} \ldots y_{m-6} y_{m-3} y_0; \\
&T_1=y_{15}y_{18} y_{21} \ldots y_{m-4}  y_{m-1} y_2.       
    \end{aligned}
  $ 
\par}
\medskip
\end{multicols}

We then form the following pair of directed cycles of length $m$:

\begin{center}
$C^2=W_1Q_1$ and $C^3=X_1R_1Y_1S_1Z_1T_1$. 
\end{center}  

It can be verified that $C^2$ and $C^3$ are disjoint. Therefore, we have a $\vec{C}_m$-factor $F_1=C^2\cup C^3$. 

\pagebreak
Next, form eight dipaths as follows:

\begin{multicols}{2}
\medskip
{\centering
  $ \displaystyle
    \begin{aligned} 
&W_2=x_0 y_1 x_2 x_3 y_4x_5 x_{6} x_{7} y_{8} y_{9} x_{10} x_{11} x_{12} x_{13}; \\
&X_2=y_2 y_5 x_{8} y_{11} x_{14};\\ 
&Y_2=x_1 x_4 y_{7} y_{10} y_{13};\\ 
&Z_2=y_0 y_3 y_{6} x_{9} y_{12} y_{15};
    \end{aligned}
  $ 
\par}
\medskip

\medskip
{\centering
  $ \displaystyle
    \begin{aligned} 
&Q_2= x_{13} y_{14} x_{15} y_{16} \ldots y_{m-3} x_{m-2} y_{m-1} x_0;\\
&R_2= x_{14} y_{17} x_{20} y_{23} \ldots x_{m-5} y_{m-2} x_1;\\ 
&S_2= y_{13}x_{16} y_{19} x_{22}\ldots y_{m-6} x_{m-3} y_0;\\ 
&T_2=y_{15} x_{18} y_{21} x_{24}\ldots y_{m-4} x_{m-1} y_2,
    \end{aligned}
  $ 
\par}
\medskip
\end{multicols}

\noindent and form the $\vec{C}_m$-factor $F_2=C^4\cup C^5$ where $C^4=W_2Q_2$ and $C^5=X_2R_2Y_2S_2Z_2T_2$. 

Finally, construct the following eight dipaths:

\begin{multicols}{2}
\medskip
{\centering
  $ \displaystyle
    \begin{aligned} 
&W_3=y_0x_1 y_2 y_3y_4y_5 x_{6} y_{7}y_{8}x_{9}y_{10} y_{11} y_{12} y_{13};\\
&X_3=x_2 x_5 x_{8}x_{11}y_{14};\\
&Y_3=y_1 x_4 x_{7}x_{10} x_{13};\\
&Z_3=x_0 x_3 y_{6}y_{9}x_{12} x_{15};
    \end{aligned}
  $ 
\par}
{\centering
  $ \displaystyle
    \begin{aligned} 
 &Q_3=y_{13}x_{14} y_{15} x_{16} \ldots y_{m-2} x_{m-1} y_0;\\
&R_3=y_{14}x_{17} y_{20} x_{23} \ldots y_{m-5} x_{m-2} y_{1};\\ 
&S_3=x_{13}y_{16} x_{19} y_{22} \ldots x_{m-6} y_{m-3} x_0;\\ 
&T_3=x_{15} y_{18} x_{21} y_{24} \ldots x_{m-4}  y_{m-1} x_2,
    \end{aligned}
  $ 
\par}
\medskip
\end{multicols}

\noindent and let $F_3=C^6\cup C^7$ for $C^6=W_3Q_3$ and $C^7=X_3R_3Y_3S_3Z_3T_3$, so $F_3$ is a $\vec{C}_m$-factor. 

It is laborious yet routine to verify that each arc of $L_{2m}$ appears exactly once in $\{F_0, F_1, $ $F_2,F_3\}$. Therefore, the set $\{F_0, F_1, F_2, F_3\}$ is a $\vec{C}_m$-factorization of $L_{2m}$. 

\noindent {\bf Case 2:} $p=5$. First, we construct a set of twelve dipaths. For each $i$, the dipaths $R'_i$ and $S'_i$ are of length $2(k-2)+1$, while the dipath $T'_i$ is of length $2(k-2)+2$:

\begin{multicols}{2}
\medskip
{\centering
  $ \displaystyle
    \begin{aligned} 
     &R'_0=x_{14} x_{17} x_{20} \ldots x_{m-6} x_{m-3} x_0;\\
&S'_0= x_{15}x_{18} x_{21} \ldots x_{m-5} x_{m-2}x_1;\\
&T'_0=x_{13}x_{16} x_{19} \ldots x_{m-4} x_{m-1} x_2;\\
&R'_1=y_{14}y_{17} y_{20} \ldots y_{m-6}  y_{m-3} y_{0};\\
 &S'_1=y_{15} y_{18} y_{21} \ldots y_{m-5}  y_{m-2} y_1;\\
  &T'_1=y_{13} y_{16} y_{19} \ldots y_{m-4}y_{m-1} y_2;\\
    \end{aligned}
  $ 
\par}
\medskip

\medskip
{\centering
  $ \displaystyle
    \begin{aligned} 
     &R'_2=x_{14} y_{17} x_{20} y_{23} \ldots y_{m-6} x_{m-3} y_0;\\
      &S'_2=y_{15} x_{18} y_{21} x_{24} \ldots x_{m-5} y_{m-2} x_1;\\ 
     &T'_2=y_{13} x_{16} y_{19} x_{22} \ldots y_{m-4} x_{m-1} y_2;\\
     &R'_3=y_{14} x_{17} y_{20} x_{23} \ldots x_{m-6} y_{m-3} x_0; \\
      &S'_3=x_{15} y_{18} x_{21} y_{24} \ldots y_{m-5} x_{m-2} y_{1}; \\
     &T'_3=x_{13}y_{16} x_{19} y_{22} \ldots x_{m-4} y_{m-1} x_2.\\
    \end{aligned}
  $ 
\par}
\medskip
\end{multicols}

We also use $Q_i, W_i, X_i, Y_i$, and $Z_i$ (for $i=0,\ldots , 3)$ from Case 1. Observe that, for each $i\in \{0,1,2,3\}$, the dipaths in $\{Q_i, R'_i, S'_i, T'_i, W_i, X_i, Y_i, Z_i\}$ are pairwise disjoint. For $i=0,\ldots, 3$, we now construct eight directed $m$-cycles:

\medskip
{\centering
  $ \displaystyle
    \begin{aligned} 
& C^i=W_iQ_i &\textrm{and} \ C^{i+4}=X_iR'_iZ_iS'_iY_iT'_i,  
    \end{aligned}
  $ 
\par}
\medskip

\noindent and it can be verified that $F_i=C^i \cup C^{i+4}$ is a $\vec{C}_m$-factor of $L_{2m}$. Furthermore, it can also be verified that each arc of $L_{2m}$ occurs exactly once in $\{F_0, F_1, F_2, F_3\}$. Consequently, the set $\{F_0, F_1, F_2, F_3\}$ is a $\vec{C}_m$-factorization of $L_{2m}$.  \end{proof}

\section{Decomposition of $\vec{X}(m, \{1,3\})\wr K^*_2$}

The purpose of this section is to construct a $\vec{C}_m$-factorization of $G_{2m}$. First, we introduce some notation. 

\begin{notation} \rm
\label{not:bas} 
Let $m=p+12k$ for some non-negative integer $k$ and $p\in \{11, 13, 17, 19\}$. We let  

\medskip
{\centering
  $ \displaystyle
    \begin{aligned} 
&V_0=\{x_0, x_1, \ldots x_{p-1}\}\cup \{y_0, y_1, \ldots, y_{p-1}\}\ \textrm{and}\\
&V_i=\{x_{p+12(i-1)}, x_{p+12(i-1)+1}, \ldots, x_{p+12{i-1}}\} \cup \{y_{p+12(i-1)}, y_{p+12(i-1)+1}, \ldots,y_{p+12{i-1}}\},
    \end{aligned}
  $ 
\par}

\noindent for $i=1,\ldots, k$. Then, we let $V(G_{2m})=\bigcup\limits_{i=0}^{k} V_i$. 
\end{notation}

Observe that $V_0$ contains $2p$ vertices while each $V_i$, for $i=1,2,\ldots, k$, contains $24$ vertices. Next, we define a function on $V(G_{2m})$. 

\begin{definition} \rm
Let $m=p+12k$ for some non-negative integer $k$ and $p\in \{11, 13, 15, 17\}$. Define a function $\rho:V(G_{2m}) \rightarrow V(G_{2m})$ as follows: $\rho(x_i)=x_{i+1}$ and $\rho(y_i)=y_{i+1}$, with subscripts modulo $m$. 
\end{definition}

The aim of Lemmas \ref{lem:list2}, \ref{lem:list1}, and \ref{lem:list3} is to reduce the question of existence of a  $\vec{C}_{m}$-factorization of $G_{2m}$ to the existence of a particular set of 32 dipaths. The properties that these dipaths must satisfy are made precise in Definitions \ref{defn:basic2} and \ref{defn:basic1}. 

\begin{definition} \rm
\label{defn:basic2}
Let $m=p+12k$ for some non-negative integer $k$ and $p\in \{11, 13, 17, 19\}$. Let $\{W, X, Y, Z, Q,R, S,T\}$ be a set of dipaths of $G_{2m}$. The 8-tuple  $(W, X, Y, Z, Q, R, S, T)$ is a  \textit{type-2 basic set of dipaths of $G_{2m}$} if it satisfies the following properties.
\begin{enumerate} [label=\textbf{C\arabic*.}]
\item Dipaths in  $\{Q, R, S, T\}$ are pairwise disjoint. If $k\geqslant 1$ then  dipaths in  $\{W, X, Y, Z\}$ are pairwise disjoint; otherwise $WX$ and $YZ$ are disjoint type-2 directed cycles. 
\item $s(X)=\rho^{-p}(t(W))$, $s(W)=\rho^{-p}(t(X))$, $s(Z)=\rho^{-p}(t(Y))$, and $s(Y)=\rho^{-p}(t(Z))$.
\item $\len(W)+\len(X)=\len(Y)+\len(Z)=p$; $\len(Q)+\len(R)= \len(S)+\len(T)=12$ if $k \geqslant 1$, and $\len(Q)=\len(R)= \len(S)=\len(T)=0$ otherwise.
\item Each of $W$, $X$, $Y$, and $Z$ has its source and internal vertices in $V_0$, and its terminus $x_t$ or $y_t$ for some $t \in \{p, p+1, p+2\}$. 
\item $t(W)=s(Q), t(X)=s(R), t(Y)=s(S)$, and $t(Z)=s(T)$. 
\item If $k\geqslant 1$ and $P \in \{Q, R, S, T\}$ such that $s(P)=x_t$ (or $y_t$), then $t(P)=x_{t+12}$ (or $y_{t+12}$ respectively). Moreover, all internal vertices of $P$ are in $V_1$. 

\end{enumerate}
\end{definition}

\begin{lemma}
\label{lem:list2}
Let $m=p+12k$ for some non-negative integer $k$ and $p\in \{11, 13, 17, 19\}$. Suppose that $(W, X, Y, Z, P_0^0, P_0^1, Q_0^0, Q_0^1)$ is a type-2 basic set of dipaths of $G_{2m}$. For each $i \in \{0,1\}$ and $j \in \{1,2, \ldots, k-1\}$, let $P_j^i=\rho^{12j}(P^i_0) $ and $Q^i_j=\rho^{12j}(Q^i_0)$. Then
\begin{center}
$C^0=WP_0^0P^0_1\ldots P^0_{k-1}XP^1_0P_1^1\ldots P^1_{k-1}$ and $C^1=YQ_0^0Q_1^0\ldots Q_{k-1}^0ZQ_0^1Q_1^1\ldots Q_{k-1}^1$
\end{center}

\noindent  are type-2 directed $m$-cycles, and $C^0 \cup C^1$ is a $\vec{C}_m$-factor of $G_{2m}$.  
\end{lemma}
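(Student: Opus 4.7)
My plan is to verify four things in turn: (i) the stated concatenations are well-defined walks, (ii) each closed walk has length $m$, (iii) each is a directed cycle (no vertex repeats), and (iv) $C^0$ and $C^1$ are vertex-disjoint and together span $V(G_{2m})$. The crucial arithmetic fact underlying everything is that since $m=p+12k$, the shift $\rho^{12k}$ acts on indices modulo $m$ as $\rho^{-p}$.

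First I would check that the building blocks join head-to-tail. By condition \textbf{C6}, if $s(P_0^0)=x_t$ then $t(P_0^0)=x_{t+12}$, so after applying $\rho^{12j}$ we get $s(P_j^0)=\rho^{12j}(s(P_0^0))$ and $t(P_j^0)=\rho^{12(j+1)}(s(P_0^0))$, which matches $s(P_{j+1}^0)$. Thus $P_0^0P_1^0\cdots P_{k-1}^0$ is a dipath starting at $s(P_0^0)=t(W)$ (by \textbf{C5}) and ending at $\rho^{12k}(t(W))=\rho^{-p}(t(W))=s(X)$ by \textbf{C2}. Repeating the argument on the second batch of $P$'s, the walk $WP_0^0\cdots P_{k-1}^0 X P_0^1\cdots P_{k-1}^1$ closes back to $\rho^{-p}(t(X))=s(W)$, again by \textbf{C2}. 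The length calculation is $\len(W)+\len(X)+k(\len(P_0^0)+\len(P_0^1))=p+12k=m$ using \textbf{C3}. The exact same reasoning with $Y,Z,Q_j^i$ handles $C^1$.

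Next I would show $C^0$ is a directed cycle, i.e., visits $m$ distinct vertices. The internal vertices of each $P_j^i$ lie in $\rho^{12j}(V_1)$, and by inspection of Notation~\ref{not:bas} the sets $V_0,\rho^{12}(V_1),\rho^{24}(V_1),\ldots,\rho^{12(k-1)}(V_1)$ partition $V(G_{2m})$ (each is a block of 24 consecutive $x$'s and $y$'s, except $V_0$ which holds the first $2p$); moreover the endpoints of each $P_j^i$ fall in the ``boundary'' indices $\{t,t+12\}$ that each appear in exactly one block of the cycle. Within $V_0$, condition \textbf{C1} tells us $W$ and $X$ are themselves vertex-disjoint dipaths (or, when $k=0$, together form a directed cycle). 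Within each translate $\rho^{12j}(V_1)$, disjointness of $P_j^0$ and $P_j^1$ follows from the disjointness of $P_0^0$ and $P_0^1$ guaranteed by \textbf{C1}, since $\rho$ is a bijection. The same argument applied to $Y,Z$ and the $Q$-dipaths yields that $C^1$ is a directed $m$-cycle.

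To finish, I would observe that $V(C^0)$ consists of the sources and internal vertices of $W,X,P_j^0,P_j^1$, which by \textbf{C4} and \textbf{C6} are precisely the $x$-vertices of $V(G_{2m})$ together with possibly a few boundary $y$'s, and symmetrically for $C^1$; here one has to confirm using \textbf{C1} and \textbf{C4}--\textbf{C6} that $V(C^0)\cap V(C^1)=\emptyset$ and $V(C^0)\cup V(C^1)=V(G_{2m})$, so that $C^0\cup C^1$ is spanning. Finally, the type-2 property of $C^0$ follows because the sum of arc-differences along $C^0$ is fixed once the lengths and index-endpoints of $W,X,P_j^i$ are fixed; since each $P_j^i$ moves index by $+12$ modulo $m$ and $W,X$ together move by $2p$ modulo $m$, the total shift is $2p+24k=2m$, forcing the cycle to be type-2. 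The main obstacle I anticipate is the bookkeeping in step (iii)--(iv): proving vertex-disjointness of all the $P_j^i$ after applying $\rho^{12j}$, and correctly accounting for boundary vertices at indices $p,p+1,p+2$ and their translates, since these are where the ``$V_0$-portion'' hands off to a $V_i$-portion.
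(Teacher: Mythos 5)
Your proposal is correct and follows essentially the same route as the paper's proof: well-definedness of the concatenations from \textbf{C2}, \textbf{C5}, \textbf{C6} together with the identity $\rho^{12k}=\rho^{-p}$ on indices modulo $m$, length $m$ from \textbf{C3}, the cycle and disjointness properties from \textbf{C1} combined with the partition of $V(G_{2m})$ into $V_0$ and the translates of $V_1$, and the type-2 property from the count $2p+24k=2m$. The only inaccuracy is your aside that $V(C^0)$ is ``precisely the $x$-vertices \ldots together with possibly a few boundary $y$'s'' --- in general $W$, $X$, and the $P_j^i$ contain many $y$-vertices --- but this is harmless because, as you yourself indicate, the disjointness and spanning of $C^0\cup C^1$ are actually obtained from the pairwise disjointness in \textbf{C1} within $V_0$ and within each translate of $V_1$, not from any $x$/$y$ dichotomy.
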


\begin{proof}
First, consider the case $k=0$. Observe that $V(G_{2m})=V_0$  and by property C3 of Definition \ref{defn:basic2}, dipaths $P_0^0, P_0^1, Q_0^0, Q_0^1$ are of length 0. Properties C1 and C3 jointly imply that $WX \cup YZ$ is a $\vec{C}_{m}$-factor of $G_{2m}$ consisting of two type-2 directed $m$-cycles.  

Assume that $k\geqslant 1$. Without loss of generality, by properties C4-C6 we may assume that  $P^0_0$ is an $(x_t, x_{t+12})$-dipath for some $t\in \{p, p+1, p+2\}$ and all internal vertices in $V_1$. Hence, the dipath $P^0_j$ is an $(x_{t+12j}, x_{t+12(j+1)})$-dipath and all of its internal vertices are in $V_j$. Therefore, for all $0\leqslant i < j \leqslant k-1$, dipaths in $P_i^0$ and $P_j^0$ are disjoint if $|i-j|>1$ and share only vertex $x_{t+12j}$ if $j=i+1$. Therefore, the concatenation 

\begin{center}
$I_0=P_0^0P^0_1\ldots P^0_{k-1}$.
\end{center}

\noindent is a well-defined ($x_t, x_{t+12k}$)-dipath.

An analogous observation holds for $P_0^1$ and dipath 

\begin{center}
$I_1=P^1_0P_1^1\ldots P^1_{k-1}$.
\end{center}

By C6, since $WP^0_0$ is a well-defined dipath, vertex $x_t$ is the terminus of $W$, and hence by C2, vertex $x_{t-p}=x_{t+12k}$ is the source of $X$. As a result, the concatenation $WI_0X$ is possible. Analogously, we show that $XI_1W$ is well-defined. 

Since $W$ and $X$, as well as $P^0_0$ and $P^1_0$, are disjoint by C1, and the sets of internal vertices of $W, I_0, X$, and $I_1$ are pairwise disjoint, it follows that $C^0=WI_0XI_1$ is a directed cycle.  Since $\len(W)+\len(X)=p$ and $\len(P^0_0)+\len(P^1_0)=12$ by C3, it follows that $C^0$ is a directed $m$-cycle. 

Analogously, we show that $C^1$ is a directed $m$-cycle. Property C1 then implies that $C^0$ and $C^1$ are disjoint. Therefore, the digraph $C^0 \cup C^1$ is a $\vec{C}_m$-factor of $G_{2m}$. 

Lastly, we see that arcs in $I_0\cup I_1$ sum to $24k$ and that arcs in $W \cup X$ sum to $2p$. It follows that $C^0$ is a type-2 directed $m$-cycles. Similarly, we can show that $C^1$ is also a type-2 directed $m$-cycle. \end{proof}

\begin{definition} \rm
\label{defn:basic1}
Let $m=p+12k$ for some  non-negative integer $k$ and $p\in \{11, 13, 17, 19\}$. Let the set $\{X, Y, R, S\}$ be a set of dipaths or directed cycles of $G_{2m}$. The 4-tuple  $(X, Y, R,S)$ is called a \textit{type-1 basic set of  dipaths} if it satisfies the following properties.
\begin{enumerate}[label=\textbf{C\arabic*.}]

\item Dipaths $R$ and $S$ are disjoint. If $k\geqslant 1$, then $X$ are $Y$ are disjoint dipaths; otherwise, they are disjoint type-1 directed cycles. 
\item $s(X)=\rho^{-p}(t(X))$ and $s(Y)=\rho^{-p}(t(Y))$.
\item $\len(X)=\len(Y)=p$;  $\len(R)=\len(S)=12$ if $k\geqslant 1$ and $\len(R)=\len(S)=0$ if $k=0$.
\item  Each of $X$ and $Y$ has its source and internal vertices in $V_0$, and its terminus is $x_t$ or $y_t$ for some $t \in \{p, p+1, p+2\}$. 
\item $t(X)=s(R)$ and $t(Y)=s(S)$.
\item If $k\geqslant 1$, and $P\in \{R, S\}$ such that $s(P)=x_t$ (or $y_t$), then $t(P)=x_{t+12}$ (or $y_{t+12}$, respectively). Moreover, all internal vertices of $P$ are in $V_1$. 
\end{enumerate}
\end{definition}

\begin{lemma}
\label{lem:list1}
Let $m=p+12k$ for some non-negative integer $k$ and $p\in \{11, 13, 17, 19\}$. Suppose that $(X, Y, P_0,Q_0)$ is a type-1 basic set of dipaths of $G_{2m}$. For each $j \in \{1,2, \ldots, k-1\}$, let $P_j=\rho^{12j}(P_0)$ and $Q_j=\rho^{12j}(Q_0)$. Then
\begin{center}
$C^0=XP_0P_1\ldots P_{k-1}$ and  $C^1=YQ_0Q_1\ldots Q_{k-1}$
\end{center}

\noindent are type-1 directed $m$-cycles, and $C^0\cup C^1$ is a $\vec{C}_m$-factor of $G_{2m}$.
\end{lemma}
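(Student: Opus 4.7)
My plan is to follow the proof of Lemma~\ref{lem:list2} almost verbatim; this statement is the ``single-path'' analog (one dipath $X$ per cycle instead of the pair $W,X$), so the argument is strictly simpler. First I would dispose of the degenerate case $k=0$: then $m=p$, $V(G_{2m})=V_0$, and by C3 the dipaths $P_0,Q_0$ have length zero. By C1, $C^0=X$ and $C^1=Y$ are then disjoint type-1 directed $m$-cycles; since $|V(X)|+|V(Y)|=2p=|V(G_{2m})|$, they form a $\vec{C}_m$-factor.

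For $k\geqslant 1$, the core task is to track vertices across the strata $V_0,V_1,\ldots,V_k$. By C4--C6 I may assume, without loss of generality, that $t(X)=x_t$ for some $t\in\{p,p+1,p+2\}$, so that $P_0$ is an $(x_t,x_{t+12})$-dipath with internal vertices in $V_1$, and hence each $P_j=\rho^{12j}(P_0)$ is an $(x_{t+12j},x_{t+12(j+1)})$-dipath with internal vertices in $V_{j+1}$ (subscripts mod $m$). Consecutive $P_j,P_{j+1}$ share only the boundary vertex $t(P_j)=s(P_{j+1})$, while pairs with $|i-j|\geqslant 2$ are vertex-disjoint, so $I=P_0P_1\cdots P_{k-1}$ is a well-defined dipath from $x_t$ to $x_{t+12k}$. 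Since $m=p+12k$, we have $x_{t+12k}=x_{t-p}$, which by C2 equals $s(X)$; hence $C^0=XI$ closes up into a directed cycle of length $p+12k=m$. The argument for $C^1$ is identical.

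For vertex-disjointness of $C^0$ and $C^1$ I would stratify: $V(C^0)\cap V_0=V(X)\setminus\{t(X)\}$, and for $i\geqslant 1$, $V(C^0)\cap V_i=V(P_{i-1})\setminus\{t(P_{i-1})\}=\rho^{12(i-1)}(V(P_0)\cap V_1)$, with analogous formulas for $C^1$ (replacing $X$ by $Y$ and $P$ by $Q$). Property C1 gives $V(X)\cap V(Y)=\emptyset$ and $V(P_0)\cap V(Q_0)=\emptyset$; pushing the latter through the $G_{2m}$-automorphism $\rho^{12(i-1)}$ handles each stratum with $i\geqslant 1$. That each cycle is of type~1 then follows exactly as in the proof of Lemma~\ref{lem:list2}: the arcs of $X$ sum to $p$ and those of each $P_j$ sum to $12$, for a total of $p+12k=m$.

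The main obstacle I anticipate is the bookkeeping at the ``wraparound'' $j=k-1$, where $t(P_{k-1})$ lies back in $V_0$ and must coincide with $s(X)$ without hitting any other vertex of $X$. Once the action of $\rho^{12}$ on the strata $V_i$ is made explicit, this and every other disjointness check reduce to translating the single pair of conditions in C1 through a power of an automorphism, which is routine.
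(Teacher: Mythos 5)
Your proposal is correct and follows essentially the same route as the paper's proof: the same $k=0$ dispatch via C1 and C3, the same concatenation of the translated dipaths $P_j=\rho^{12j}(P_0)$ into a single dipath that closes up on $s(X)=\rho^{-p}(t(X))$ via C2, and the same arc-sum computation for the type-1 property. Your stratified disjointness check (pushing C1 through powers of the automorphism $\rho^{12}$) is in fact slightly more explicit than the paper's one-line appeal to C1, and your placement of the internal vertices of $P_j$ in $V_{j+1}$ is the correct indexing.
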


\begin{proof}
First, assume that $k=0$. Then, properties C1 and C3 of Definition \ref{defn:basic1} jointly imply that $X \cup Y$ is a $\vec{C}_m$-factor of $G_{2m}$ consisting of two type-1 directed $m$-cycles. 

Without loss of generality, by properties C4-C6, we may  assume that $P_0^0$ is an $(x_t, x_{t+12})$-dipath for some $t\in \{p, p+1, p+2\}$ and all of its internal vertices are in $V_1$. Hence $P^j_0$ is an $(x_{t+12j}, x_{t+12(j+1)})$-dipath with all of its internal vertices in $V_j$. As a result, for all $0 \leqslant i < j \leqslant k-1$, dipaths $P_0^i$ and $P_0^j$ are disjoint if $|i-j|>1$, and share only vertex $x_{t+12j}$ if $j=i+1$. Therefore, the concatenation
\begin{center}
$I_0=P^0_0P^1_0\ldots P^0_{k-1}$
\end{center}

\noindent is a well-defined $(x_t, x_{t+12k})$ dipath. 

Similarly, we can construct dipath $I_1$ as follows:
 
 \begin{center}
$I_1=Q^0_0Q^1_0\ldots Q^0_{k-1}$.
\end{center}

Next, by C5, it follows that $XI_0$ is a well-defined concatenation. Furthermore, by C2 vertex $x_{t-p}=x_{t+12k}$ is also the source of $X$. Therefore, the concatenation $ I_0X$ is also well-defined. Since the sets of internal vertices of $X$ and $I_0$ are disjoint, it follows that $C^0=XI_0$ is a directed cycle. Since $\len(X)=p$ and $\len(I_0)=12k$ by C3, the directed cycle $C^0$ is of length $m$. 

Analogously, it can be shown that $C^1$ is also a directed $m$-cycle. Property C1 then implies that $C^0$ and $C^1$ are disjoint. Therefore, the digraph $C^0 \cup C^1$ is a $\vec{C}_m$-factor of $G_{2m}$. 

Lastly, we see that arcs in $I_0$ and $I_1$ sum to $12k$, respectively. Moreover, arcs in $X$ and $Y$ sum to $p$, respectively. It follows  that $C^0$ and $C^1$ are both type-1 directed $m$-cycles.  \end{proof}

\begin{lemma}
\label{lem:list3}
Let $m=p+12k$ with $k$ a non-negative integer and $p \in \{11, 13, 17, 19\}$. If $G_{2m}$ admits three type-2 basic sets of dipaths and two type-1 basic sets of dipaths such that the dipaths and directed cycles in these five sets are pairwise arc-disjoint, then $G_{2m}$ admits a $\vec{C}_m$-factorization 
\end{lemma}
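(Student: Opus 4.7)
The plan is to assemble the five $\vec{C}_m$-factors by invoking Lemmas \ref{lem:list2} and \ref{lem:list1} directly on the five given basic sets. For each of the three type-2 basic sets, Lemma \ref{lem:list2} yields a $\vec{C}_m$-factor consisting of two type-2 directed $m$-cycles, and for each of the two type-1 basic sets, Lemma \ref{lem:list1} yields a $\vec{C}_m$-factor consisting of two type-1 directed $m$-cycles. This produces five $\vec{C}_m$-factors $F_1,\dots,F_5$, and the task reduces to verifying that $\{F_1,\dots,F_5\}$ is a $\vec{C}_m$-factorization of $G_{2m}$.

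Next, a direct out-degree count in $G_{2m}=\vec{X}(m,\{1,3\})\wr K_2^*$ shows that every vertex has out-degree $5$ (two out-neighbours at each of differences $1$ and $3$, plus one within-cloud neighbour at difference $0$), giving $|A(G_{2m})|=10m$. Since each $\vec{C}_m$-factor contributes exactly $2m$ arcs, the five factors together account for all $10m$ arcs provided they are pairwise arc-disjoint. The entire proof therefore reduces to establishing pairwise arc-disjointness of $F_1,\dots,F_5$.

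To establish arc-disjointness I would classify the arcs of each factor by which pair of regions $V_i,V_{i'}$ they connect. Property C4 of Definitions \ref{defn:basic2} and \ref{defn:basic1} confines the head dipaths $W,X,Y,Z$ (or $X,Y$ in the type-1 case) to arcs with both endpoints in $V_0\cup V_1$, and property C6 confines the core dipaths $P_0^0,P_0^1,Q_0^0,Q_0^1$ (or $R,S$) to arcs with both endpoints in $V_1\cup V_2$. Because $\rho$ is a graph automorphism of $G_{2m}$, the shift $\rho^{12j}$ of any core dipath has arcs with both endpoints in $V_{j+1}\cup V_{j+2}$, under the convention $V_{k+1}=V_0$ which captures the wrap-around at shift level $j=k-1$. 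Arc-disjointness at each fixed shift level (including the head level) across different basic sets is exactly the hypothesis, transported via $\rho^{12j}$ which preserves arc-disjointness. Within-factor arc-disjointness is already certified by Lemmas \ref{lem:list2} and \ref{lem:list1}.

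The final thing to rule out is a collision between different shift levels of (possibly different) basic sets. Here I would use the observation that by C6 the source and internal vertices of a core dipath all lie in $V_1$ while only the terminus lies in $V_2$, so a core dipath contributes only arcs within $V_1$ or cross arcs of the form $V_1\to V_2$; after shifting, level $j$ contributes only arcs within $V_{j+1}$ or cross arcs $V_{j+1}\to V_{j+2}$. Two levels $j\neq j'$ have region pairs overlapping in at most one region $V_\ell$, and in that region exactly one of the two levels can contribute within-region arcs while the other contributes only arcs incident to $V_\ell$ from the opposite side, so no arc is produced by both. The main bookkeeping point, which is the only slightly delicate step, is to check that the wrap-around cross arcs $V_k\to V_0$ at shift level $j=k-1$ remain disjoint from every head dipath; this is immediate because head dipaths use only within-$V_0$ arcs or cross arcs $V_0\to V_1$, never $V_k\to V_0$. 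Combining these observations, $\{F_1,\dots,F_5\}$ is a $\vec{C}_m$-factorization of $G_{2m}$.
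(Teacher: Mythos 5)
Your proposal is correct and follows essentially the same route as the paper: both invoke Lemmas \ref{lem:list2} and \ref{lem:list1} to produce the five $\vec{C}_m$-factors, reduce the problem to their pairwise arc-disjointness, and close with the out-degree-five arc count. The only cosmetic difference is in the bookkeeping for arc-disjointness --- you classify arcs forward by the region pair $(V_{j+1},V_{j+2})$ they occupy, whereas the paper pulls a hypothetical doubly-occurring arc back via $\rho^{-12(j-1)}$ to an arc with tail in $V_1$ and contradicts the hypothesis there; both arguments rest on the same use of the shift $\rho^{12j}$ and properties C4 and C6.
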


\begin{proof}
Let $A_1, A_2$, and $A_3$ be type-2 basic sets of dipaths, and $A_4$ and $A_5$ be type-1 basic sets of dipaths such that the 32 dipaths and directed cycles in $S=A_1\cup A_2 \cup A_3 \cup A_4 \cup A_5$ are pairwise arc-disjoint.  

By Lemma \ref{lem:list2}, each $A_i$ for $i \in \{1,2,3\}$ gives rise to a $\vec{C}_m$-factor $F_i$ of $G_{2m}$ consisting of two type-2 directed $m$-cycles. In addition, by Lemma \ref{lem:list1}, each $A_i$ for $i \in \{4,5\}$ gives rise to a $\vec{C}_m$-factor $F_i$ consisting of  two type-1 directed $m$-cycles. It remains to show that the $F_i$, for $i \in \{1,2,\ldots, 5\}$,  are pairwise arc-disjoint. 

Suppose that an arc $a=(x_r, x_s)$ of $G_{2m}$, where $x_s\in V_j$ for some $j\in \{0, \ldots, k\}$, occurs twice in the $\vec{C}_m$-factors $F_1, \ldots, F_5$. By the construction of $F_1, \ldots, F_5$ from Lemmas \ref{lem:list2} and \ref{lem:list1}, it follows that $j\geqslant 2$, and the arc $a'=(x_{r-12(j-1)}, x_{s-12(j-1)})$ also occurs twice in $F_1, \ldots, F_5$. However, we see that $a'$ has a tail in $V_1$, and thus appears in $S$. By assumption, the arc $a'$ appears exactly once in $S$, and by construction from Lemmas \ref{lem:list2} and \ref{lem:list1}, it appears nowhere else in $F_1, \ldots, F_5$, a contradiction.  

An analogous argument applies if $a$ is of the form $(y_r, y_s)$, $(y_r, x_s)$, and $(x_r, y_s)$. Therefore, each arc of $G_{2m}$ occurs at most once in $F_1,\ldots, F_5$. Since $G_{2m}$ is of degree five, it follows that every arc occurs exactly once. Therefore, the set $\{F_1, \ldots, F_5\}$ is a $\vec{C}_m$-factorization of $G_{2m}$.   \end{proof}

Lemma \ref{lem:list3} implies that, to construct a $\vec{C}_m$-factorization of $G_{2m}$, it suffices to build three type-2 basic sets of dipaths and two type-1 basic sets of dipaths such that the dipaths and directed cycles in the union of these five sets are pairwise arc-disjoint.  

In the proof of Proposition \ref{thm:cand} below, we consider four cases, one for each $p\in \{11, 13, 17,$ $19\}$. This yields a construction for each congruency class modulo 12 for odd $m \geqslant 11$ and $3 \nmid m$. In each case, we construct three type-2 basic sets of dipaths and two type-1 basic sets. It is straightforward, though tedious, to verify that the hypotheses of Lemma \ref{lem:list3} are satisfied by these five sets of dipaths. To aid in the verification of these properties, we illustrate all dipaths constructed in the proof of Proposition \ref{thm:cand} in Appendix A. 

\begin{prop}
\label{thm:cand}
Let $m\geqslant 11$ be an odd integer. The digraph $G_{2m}$ admits a $\vec{C}_m$-factorization. 
\end{prop}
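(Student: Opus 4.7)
My plan is to apply Lemma \ref{lem:list3}, which reduces the problem to exhibiting three type-2 basic sets of dipaths and two type-1 basic sets of dipaths in $G_{2m}$ whose constituent dipaths and directed cycles are pairwise arc-disjoint. To match the scope of Notation \ref{not:bas} and Definitions \ref{defn:basic2}--\ref{defn:basic1}, I would split into four cases according to the residue of $m$ modulo $12$, writing $m=p+12k$ with $p\in\{11,13,17,19\}$ and $k\geqslant 0$. These cases together cover all odd $m\geqslant 11$ with $3\nmid m$, which is the range in which the framework developed in Section 4 applies.

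For each fixed $p$, I would explicitly exhibit the $32$ dipaths making up the five basic sets. Within each type-2 basic set, the ``initial'' dipaths $W,X,Y,Z$ would be constructed on $V_0$ and terminate at vertices $x_t$ or $y_t$ with $t\in\{p,p+1,p+2\}$, as required by property C4, and their sources and termini inside $V_0$ would be chosen so that the $\rho^{-p}$ pairing of endpoints (property C2) holds. The ``block'' dipaths $Q,R,S,T$ (respectively $R,S$ in the type-1 sets) would be length-$12$ dipaths leaving $V_0$ and entering $V_1$, with all internal vertices in $V_1$; translating them by $\rho^{12j}$ tiles the consecutive blocks $V_j\cup V_{j+1}$, which is precisely the mechanism by which Lemmas \ref{lem:list2} and \ref{lem:list1} close everything into directed $m$-cycles. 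Following the template of the Case 1/Case 2 analysis in Lemma \ref{lem:2}, I would first pin down the six dipaths inside $V_0\cup V_1$ and then declare the remaining block dipaths by the $\rho^{12}$-periodic pattern.

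The verification then has two layers. Within each basic set, I would check the length identities ($\len(W)+\len(X)=\len(Y)+\len(Z)=p$ and length $12$ for each block dipath), vertex-localization, the $\rho^{-p}$ matching of endpoints, and the source/terminus compatibilities $t(W)=s(Q)$, $t(X)=s(R)$, and so on. Across the five basic sets, I would certify pairwise arc-disjointness: in fact, by $\rho^{12}$-invariance of the block portion it suffices to audit the arcs incident to $V_0\cup V_1$. Since every vertex of $G_{2m}$ has out-degree $5$ and the framework yields five $\vec{C}_m$-factors once the five basic sets are valid, the arc count then forces a full decomposition.

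The main obstacle will be producing the $32$ dipaths explicitly and consistently in each of the four congruence classes. The arcs of $G_{2m}$ partition into three ``colors'' by difference ($0$, $1$, and $3$), and the five $\vec{C}_m$-factors must partition each color class exactly; moreover, within each basic set the encoded directed $m$-cycles are required to be of prescribed type (type-$2$ or type-$1$), which pins down the multiset of differences appearing in each cycle and the number of vertical arcs used. Because $|V_0|=2p$ varies with $p$, the balancing differs subtly across the four cases and must be worked out essentially by hand. Once the lists are written down (and, as the author promises, illustrated in Appendix A), the verification of C1--C6 together with arc-disjointness is laborious but entirely mechanical.
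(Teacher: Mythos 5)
Your plan coincides with the paper's strategy: invoke Lemma \ref{lem:list3}, split into the four cases $p\in\{11,13,17,19\}$ with $m=p+12k$, and exhibit three type-2 and two type-1 basic sets of dipaths whose members are pairwise arc-disjoint. But as written this is a plan, not a proof. Lemma \ref{lem:list3} already performs the entire reduction you describe; the mathematical content of Proposition \ref{thm:cand} is precisely the explicit exhibition of the $32$ dipaths in each of the four cases, together with the (tedious but checkable) verification of C1--C6 and arc-disjointness. You repeatedly say ``I would explicitly exhibit'' and ``once the lists are written down,'' but you never write them down, and the existence of such a configuration cannot be taken for granted. The paper itself shows that superficially similar targets fail: $\vec{X}(m,\{1,2\})\wr\overline{K}_2$ admits no $\vec{C}_m$-factorization at all, and Lemma \ref{lem:2} shows $L_{2m}$ has no $\vec{C}_m$-factorization when $3\mid m$ because the difference-counting constraints you allude to become infeasible. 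So the ``balancing'' you defer to a hand computation is exactly the point at which the argument could, in principle, fail, and nothing in your proposal rules that out.

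Two smaller remarks. First, your claim that arc-disjointness need only be audited on arcs incident to $V_0\cup V_1$ is correct and is the content of the translation argument inside the proof of Lemma \ref{lem:list3}, so it is not an additional saving. Second, your description of the block dipaths as tiling $V_j\cup V_{j+1}$ after translation by $\rho^{12j}$ is the right picture, but note that each translated dipath must have \emph{all} internal vertices inside a single block $V_j$ and only its terminus in the next block (property C6); a dipath genuinely straddling two blocks would break the disjointness argument for $|i-j|>1$ in Lemmas \ref{lem:list2} and \ref{lem:list1}. To complete the proof you must supply, for each $p$, the concrete lists playing the roles of $L_0,\ldots,L_4$ in the paper's Cases 1--4, or some equivalent certificate of their existence.
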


\begin{proof}
Let $m=p+12k$ with $k$ a non-negative integer and $p \in \{11,13,17,19\}$. Throughout this proof, we shall refer to Notation \ref{not:2di} and \ref{not:bas}. In each case, we construct three type-2 basic sets of dipaths $L_0, L_1$, and $L_2$, and two type-1 basic sets of dipaths $L_3$ and $L_4$. In each case, it can then be verified that dipaths in $L_0\cup L_1\cup L_2 \cup L_3 \cup L_4$ are pairwise arc-disjoint, thereby satisfying the hypotheses of Lemma \ref{lem:list3}. 

\noindent {\bf Case 1:} $p=11$. See Figures \ref{fig:m11.1.1}-\ref{fig:m11.5.2}.

Let $L_0=(W_0, X_0, Y_0, Z_0, Q_0, R_0, S_0, T_0)$ where

\begin{multicols}{2}
{\centering
  $ \displaystyle
    \begin{aligned} 
&W_0=x_0 x_3 x_4 y_7 x_7 y_8 y_{11};\\
&X_0=y_0 y_1 y_4 y_5 x_8 x_{11};\\
&Y_0=x_1 y_2 x_5 x_6 x_9 x_{10} y_{10} x_{13};\\
&Z_0=x_2 y_3 y_6 y_9 x_{12};
    \end{aligned}
  $ 
\par}
{\centering
  $ \displaystyle
    \begin{aligned} 
&Q_0=y_{11} y_{14} y_{17} y_{18} y_{19} x_{22} y_{23};\\
&R_0=x_{11} y_{12} y_{13} x_{16} x_{17} y_{20} x_{23};\\
&S_0=x_{13} x_{14} x_{15} y_{16} x_{19} x_{20} x_{21} y_{22} x_{25};\\
&T_0=x_{12} y_{15} x_{18} y_{21} x_{24}.
    \end{aligned}
  $ 
\par}
\end{multicols}

Let $L_1=(W_1, X_1, Y_1,  Z_1, Q_1, R_1, S_1, T_1)$ where

\begin{multicols}{2}
{\centering
  $ \displaystyle
    \begin{aligned} 
&W_1=x_0 y_1 x_4 y_5 y_8 y_9 x_9 x_{12};\\
&X_1=x_1 y_4x_7 y_{10} x_{11};\\
&Y_1=y_0 y_3x_3 x_6 y_6 y_7 x_{10} y_{13};\\
&Z_1=y_2 x_2 x_5 x_8 y_{11};
    \end{aligned}
  $ 
\par}
{\centering
  $ \displaystyle
    \begin{aligned} 
&Q_1=x_{12} y_{12} y_{15} x_{15} x_{18} y_{18} y_{21} x_{21}  x_{24}  ;\\
&R_1=x_{11} y_{14} x_{17} x_{20} x_{23};\\
&S_1=y_{13} x_{13} y_{16} x_{16} y_{19} x_{19} y_{22} x_{22} y_{25};\\
&T_1=y_{11} x_{14} y_{17} y_{20} y_{23}.
    \end{aligned}
  $ 
\par}
\end{multicols}

Let $L_2=(W_2, X_2, Y_2, Z_2, Q_2, R_2, S_2, T_2)$ where

\begin{multicols}{2}
{\centering
  $ \displaystyle
    \begin{aligned} 
&W_2=x_0 y_3 x_6 y_9 y_{10}y_{11};\\
&X_2=y_0 x_3 y_4 y_7 x_8 y_8 x_{11};\\
&Y_2=y_1 x_1 x_4 x_7 x_{10} x_{13};\\
&Z_2=x_2 y_2 y_5 x_5 y_6 x_9 y_{12};
\medskip
    \end{aligned}
  $ 
\par}
{\centering
  $ \displaystyle
    \begin{aligned} 
&Q_2=y_{11} x_{12} y_{13} y_{16} y_{17} x_{20} y_{23};\\
&R_2=x_{11} x_{14} x_{17} x_{18} y_{19} y_{22} x_{23};\\
&S_2=x_{13} y_{14} y_{15} x_{16} x_{19} y_{20} y_{21} x_{22} x_{25};\\
&T_2=y_{12} x_{15} y_{18} x_{21} y_{24}.
    \end{aligned}
  $ 
\par}
\end{multicols}

For $k=0$, we replace each dipath in $\{Q_i, R_i, S_i, T_i\ |\ i=0,1,2\}$ with a dipath of length $0$ with the same source. It can  be verified that the 8-tuples $L_0$, $L_1$, and $L_2$ satisfy conditions C1-C6 of Definition \ref{defn:basic2}, and thus are type-2 basic sets of dipaths. 

Let  $L_3=(X_3, Y_3, R_3, S_3)$ where

\begin{multicols}{2}
{\centering
  $ \displaystyle
    \begin{aligned} 
&X_3=x_0 y_0 x_1y_1 x_2 x_3 y_6 x_7x_8y_9 x_{10}x_{11};\\
&Y_3=y_2 y_3 x_4y_4x_5y_5 x_6 y_7 y_8 x_9 y_{10} y_{13};\\
\medskip
    \end{aligned}
  $ 
\par}
{\centering
  $ \displaystyle
    \begin{aligned} 
&\hspace{-0.6cm} R_3=x_{11} y_{11} y_{12} x_{12} x_{13} x_{16} y_{17} x_{17} y_{18} x_{18} x_{19} x_{22} x_{23};\\
&\hspace{-0.6cm} S_3=y_{13} x_{14} y_{14} x_{15} y_{15} y_{16} y_{19} x_{20} y_{20} x_{21}y_{21} y_{22} y_{25}.\\
    \end{aligned}
  $ 
\par}
\end{multicols}
\vspace{-8mm}

\pagebreak
Finally, let $L_4=(X_4, Y_4, R_4, S_4)$ where
\begin{multicols}{2}
{\centering
  $ \displaystyle
    \begin{aligned} 
&X_4=y_0 x_0 x_1 x_2y_5 y_6 x_6x_7 y_7 y_{10} x_{10} y_{11};\\
&Y_4=y_1 y_2 x_3 y_3 y_4 x_4 x_5 y_8 x_8 x_9 y_9 y_{12}; \\
    \end{aligned}
  $ 
\par}
{\centering
  $ \displaystyle
    \begin{aligned} 
&\hspace{-0.6cm} R_4=y_{11} x_{11} x_{12} x_{15} x_{16} y_{16} x_{17}y_{17} x_{18} x_{21} x_{22} y_{22} y_{23};\\
&\hspace{-0.6cm} S_4=y_{12} x_{13} y_{13}y_{14} x_{14}y_{15}y_{18}x_{19} y_{19} y_{20} x_{20}y_{21} y_{24}.\\
    \end{aligned}
  $ 
\par}
\end{multicols}

Again, for $k=0$, we replace each dipath in $\{R_i, S_i \ |\ i=3, 4\}$ with a dipath of length $0$ with the same source. Similarly, it can be verified that quadruples $L_3$ and $L_4$ satisfy conditions C1-C6 of Definition \ref{defn:basic1}, and thus are type-1 basic sets of dipaths.  

It is tedious, but straightforward, to verify that, for each vertex $x_i$ ($y_i$) in $V_0$, each of the five arcs with tail $x_i$ ($y_i$) appears in a dipath in $\{W_j, X_j, Y_j, Z_j: j=0,1,2\} \cup \{X_j, Y_j: j=3,4\}$ exactly once. An analogous claim holds for vertices $x_i$ ($y_i$) in $V_1$ and the set of dipaths $\{Q_j, R_j, S_j, T_j: j=0,1,2\}\cup \{R_j, S_j: j=3,4\}$. It follows that the dipaths in $L_0\cup \ldots \cup L_4$ are pairwise arc-disjoint and thus satisfy the hypothesis of Lemma \ref{lem:list3}. As a result, the digraph $G_{2m}$ admits a $\vec{C}_m$-factorization.

\noindent {\bf Case 2:} $p=13$. See Figures \ref{fig:m13.1.1}-\ref{fig:m13.5.2}.

Let $L_0=(W_0, X_0, Y_0, Z_0, Q_0, R_0, S_0, T_0)$ where

\begin{multicols}{2}
{\centering
  $ \displaystyle
    \begin{aligned} 
&W_0=y_0 x_1 x_2 y_5 x_6 x_9 y_{12} x_{13};\\
&X_0=x_0 y_3 y_4 y_7 y_{10} x_{10} y_{13};\\
&Y_0=y_1 x_4 x_5 x_8 y_9 x_{12} y_{15};\\
&Z_0=y_2 x_3 y_6 x_7 y_8 x_{11} y_{11} y_{14};\\
    \end{aligned}
  $ 
\par}
{\centering
  $ \displaystyle
    \begin{aligned} 
&Q_0=x_{13} y_{16} y_{19} x_{20} x_{21} y_{24} x_{25}; \\
&R_0= y_{13} x_{14} x_{15} y_{18} x_{19} y_{22} y_{25};\\
&S_0=y_{15} x_{16} y_{17} x_{18} y_{21} x_{22} y_{23} x_{24} y_{27};\\
&T_0=y_{14} x_{17} y_{20} x_{23} y_{26}.
    \end{aligned}
  $ 
\par}
\end{multicols}

Let $L_1=(W_1, X_1, Y_1, Z_1, Q_1, R_1, S_1, T_1)$ where

\begin{multicols}{2}
{\centering
  $ \displaystyle
    \begin{aligned} 
&W_1=x_0 y_0 x_3 x_6 x_7 y_{10}y_{11} x_{14};\\
&X_1=x_1 y_4 x_5 y_6 y_{9} x_{10} x_{13};\\
&Y_1=y_2 y_5 x_8 x_{11} y_{14};\\
&Z_1=y_1 x_2 y_3 x_4 y_7 y_8 x_9 x_{12} y_{12}y_{15};\\
    \end{aligned}
  $ 
\par}
{\centering
  $ \displaystyle
    \begin{aligned} 
&Q_1=x_{14} x_{17} x_{20} x_{23} x_{26};\\
&R_1= x_{13} y_{13} y_{16} x_{16} x_{19} y_{19} y_{22} x_{22} x_{25};\\
&S_1=y_{14} y_{17} y_{20} y_{23} y_{26};\\
&T_1=y_{15} x_{15} x_{18} y_{18} y_{21} x_{21} x_{24} y_{24} y_{27};\\
    \end{aligned}
  $ 
\par}
\end{multicols}

Let $L_2=(W_2, X_2, Y_2, Z_2, Q_2, R_2, S_2, T_2)$ where

\begin{multicols}{2}
{\centering
  $ \displaystyle
    \begin{aligned} 
&W_2=x_0 x_3 y_3  y_6 x_9 y_{10} y_{13};\\
&X_2=y_0 y_1 y_4 x_7 x_8 y_{11} x_{12} x_{13};\\
&Y_2=x_2 y_2 x_5 x_6 y_7 x_{10} x_{11}  x_{14};\\
&Z_2=x_1 x_4 y_5 y_8 y_9 y_{12} x_{15};\\
    \end{aligned}
  $ 
\par}
{\centering
  $ \displaystyle
    \begin{aligned} 
&Q_2=y_{13}y_{14} y_{15} x_{18} x_{19} x_{22} y_{25};\\
&R_2=x_{13} x_{16} y_{19} y_{20} y_{21} x_{24} x_{25};\\
&S_2=x_{14} y_{17} x_{20} y_{23} x_{26} ;\\
&T_2=x_{15} y_{16} x_{17} y_{18} x_{21} y_{22} x_{23} y_{24} x_{27}.
    \end{aligned}
  $ 
\par}
\end{multicols}

For $k=0$, we replace each dipath in $\{Q_i, R_i, S_i, T_i\ |\ i=0,1,2\}$ with a dipath of length $0$ with the same source. It can  be verified that the 8-tuples $L_0$, $L_1$, and $L_2$ satisfy conditions C1-C6 of Definition \ref{defn:basic2}, and thus are type-2 basic sets of dipaths. 

\pagebreak

Let $L_3=(X_3, Y_3, R_3, S_3)$ where

\begin{multicols}{2}
{\centering
  $ \displaystyle
    \begin{aligned} 
&X_3=y_0 x_0 y_1 x_1 y_2 y_3 x_6 y_6 y_7 x_7 x_{10} y_{11} y_{12} y_{13};\\
&Y_3=x_2 x_3 x_4 y_4 y_5 x_5 y_8 x_8 x_9 y_9 y_{10} x_{11} x_{12}  x_{15};\\
    \end{aligned}
  $ 
\par}
{\centering
  $ \displaystyle
    \begin{aligned} 
&R_3= y_{13} x_{13} y_{14} x_{14} y_{15} y_{18} y_{19} x_{19} y_{20} x_{20} y_{21} y_{24} y_{25};\\
&S_3=x_{15} x_{16} y_{16} y_{17} x_{17} x_{18} x_{21} x_{22}y_{22} y_{23} x_{23} x_{24} x_{27}.\\
    \end{aligned}
  $ 
\par}
\end{multicols}

Let $L_4=(X_4, Y_4, R_4, S_4)$ where

\begin{multicols}{2}
{\centering
  $ \displaystyle
    \begin{aligned} 
&X_4=x_0 x_1 y_1 y_2 x_2 x_5 y_5y_6 x_6 y_9 x_9 x_{10} y_{10} x_{13};\\
&Y_4=y_0 y_3 x_3 y_4 x_4x_7 y_7 x_8 y_8 y_{11}x_{11} y_{12} x_{12}  y_{13};\\
    \end{aligned}
  $ 
\par}
{\centering
  $ \displaystyle
    \begin{aligned} 
&R_4= x_{13}x_{14} y_{14} x_{15} y_{15} y_{16} x_{19} x_{20} y_{20} x_{21} y_{21} y_{22} x_{25};\\
&S_4=y_{13} x_{16} x_{17} y_{17} y_{18} x_{18} y_{19} x_{22} x_{23} y_{23}y_{24} x_{24} y_{25}.\\
    \end{aligned}
  $ 
\par}
\end{multicols}

Again, for $k=0$, we replace each dipath in $\{R_i, S_i \ |\ i=3, 4\}$ with a dipath of length $0$ with the same source. Similarly, it can be verified that quadruples $L_3$ and $L_4$ satisfy conditions C1-C6 of Definition \ref{defn:basic1}, and thus are type-1 basic sets of dipaths. 

Once again, it is tedious but straightforward to verify that $\{L_0, L_1, L_2, L_3, L_4\}$ satisfies the hypothesis of Lemma \ref{lem:list3}.

\noindent {\bf Case 3:} $p=17$. See Figures \ref{fig:m17.1.1}-\ref{fig:m17.5.2}.

Let $L_0=(W_0, X_0, Y_0,  Z_0, Q_0, R_0, S_0, T_0)$ where

\begin{multicols}{2}

{\centering
  $ \displaystyle
    \begin{aligned} 
&W_0=y_0 x_0y_3x_6y_7x_8 x_{11} y_{12}x_{15}x_{18};\\
&X_0=x_1 y_2 x_5 y_6 y_9x_{10}x_{13}y_{14}y_{17} ;\\
&Y_0=y_1 y_4y_5y_8 x_9 x_{12}y_{13} y_{16}x_{16}x_{19};\\
&Z_0=x_2 x_3 x_4x_7 y_{10} y_{11}x_{14} y_{15} y_{18};
    \end{aligned}
  $ 
\par}
{\centering
  $ \displaystyle
    \begin{aligned} 
&Q_0=x_{18} y_{21} x_{24} y_{27} x_{30};\\
&R_0= y_{17} x_{17}y_{20} x_{20} y_{23} x_{23} y_{26} x_{26} y_{29};\\
&S_0=x_{19}y_{19} x_{22}y_{22} x_{25} y_{25} x_{28}y_{28} x_{31};\\
&T_0=y_{18} x_{21}y_{24} x_{27} y_{30}.
    \end{aligned}
  $ 
\par}
\medskip

\end{multicols}

Let $L_1=(W_1, X_1, Y_1, Z_1, Q_1, R_1, S_1, T_1)$ where

\begin{multicols}{2}
\medskip
{\centering
  $ \displaystyle
    \begin{aligned} 
&W_1=x_1 x_4y_4 x_7 y_7 x_{10} y_{10} y_{13} y_{14} x_{17};\\
&X_1=x_0 x_3 y_3 y_6 x_6 x_9y_{12} y_{15}x_{18} ;\\
&Y_1=y_0 y_1 y_2 y_5 x_8 y_{11} x_{11} x_{14} x_{15} y_{16} x_{19};\\
&Z_1=x_2  x_5 y_8 y_9 x_{12} x_{13} x_{16} y_{17};
    \end{aligned}
  $ 
\par}
\medskip
{\centering
  $ \displaystyle
    \begin{aligned} 
&Q_1=x_{17} y_{18} y_{19} y_{22} x_{23} y_{24} y_{25} y_{28}x_{29};\\
&R_1= x_{18} x_{21} x_{24} x_{27} x_{30};\\
&S_1=x_{19} x_{20} y_{21} x_{22} x_{25} x_{26} y_{27} x_{28} x_{31};\\
&T_1=y_{17} y_{20} y_{23} y_{26} y_{29}.
    \end{aligned}
  $ 
\par}
\end{multicols}

Let $L_2=(W_2, X_2, Y_2, Z_2, Q_2, R_2, S_2, T_2)$ where

\begin{multicols}{2}
{\centering
  $ \displaystyle
    \begin{aligned} 
&W_2=y_0 x_3 x_6 x_7 y_8y_{11}y_{14}x_{14} x_{17}; \\
&X_2=x_0 x_1 y_4 x_5 x_8 y_9y_{12} x_{13} y_{16} y_{17};\\ 
&Y_2=y_1 x_2 y_5 y_6x_9x_{10} y_{13} x_{16} y_{19} ; \\
&Z_2= y_2 y_3 x_4 y_7y_{10} x_{11} x_{12} y_{15}x_{15} y_{18};\\
    \end{aligned}
  $ 
\par}
{\centering
  $ \displaystyle
    \begin{aligned} 
 &Q_2=x_{17} x_{18}x_{19}y_{22} y_{23} x_{26} x_{29};\\
&R_2= y_{17}x_{20} x_{23}x_{24} x_{25}y_{28} y_{29};\\
&S_2=y_{19}y_{20} x_{21} x_{22} y_{25}y_{26}x_{27} x_{28} y_{31};\\
 &T_2=y_{18}y_{21} y_{24} y_{27} y_{30}.
    \end{aligned}
  $ 
\par}
\end{multicols}

For $k=0$, we replace each dipath in $\{Q_i, R_i, S_i, T_i\ |\ i=0,1,2\}$ with a dipath of length $0$ with the same source. It can  be verified that the 8-tuples $L_0$, $L_1$, and $L_2$ satisfy conditions C1-C6 of Definition \ref{defn:basic2}, and thus are type-2 basic sets of dipaths. 

\pagebreak
Let $L_3=(X_3, Y_3, R_3, S_3)$ where

\medskip
{\centering
  $ \displaystyle
    \begin{aligned} 
&X_3=x_0 y_0 x_1 y_1 x_4 y_5 x_5 x_6 y_6 x_7 x_{10} x_{11} y_{11} y_{12} x_{12} x_{15} x_{16} x_{17};\\
&Y_3=y_2 x_2 y_3 x_3 y_4 y_7 y_8 x_8 x_9 y_9y_{10} x_{13} y_{13} x_{14}y_{14} y_{15} y_{16} y_{19} ;\\
&R_3= x_{17} y_{17} x_{18} y_{18} x_{19} x_{22} x_{23} y_{23} x_{24} y_{24} x_{25} x_{28} x_{29} ;\\
&S_3=y_{19} x_{20} y_{20} y_{21} x_{21}y_{22} y_{25} x_{26}y_{26} y_{27}x_{27}y_{28} y_{31}.\\
    \end{aligned}
  $ 
\par}
 \medskip
 
Let $L_4=(X_4, Y_4, R_4, S_4)$ where

\medskip
{\centering
  $ \displaystyle
    \begin{aligned} 
&X_4=x_0 y_1 x_1x_2y_2 x_3 y_6 y_7 x_7 x_8 y_8 x_{11} y_{14} x_{15} y_{15} x_{16} y_{16} x_{17};\\
&Y_4=y_0 y_3 y_4 x_4 x_5 y_5 x_6 y_9 x_9 y_{10} x_{10} y_{11} x_{12} y_{12} y_{13} x_{13} x_{14}y_{17};\\
&R_4= x_{17} x_{20} x_{21} y_{21} y_{22} x_{22} y_{23} y_{24} x_{24} y_{25} x_{25} y_{26} x_{29};\\
&S_4=y_{17} y_{18} x_{18}y_{19} x_{19} y_{20} x_{23} x_{26} x_{27} y_{27}y_{28} x_{28} y_{29}.\\
    \end{aligned}
  $ 
\par}
\medskip
 
Again, for  $k=0$, we replace each dipath in $\{R_i, S_i \ |\ i=3, 4\}$ with a dipath of length $0$ with the same source. Similarly, it can be verified that quadruples $L_3$ and $L_4$ satisfy conditions C1-C6 of Definition \ref{defn:basic1}, and thus are type-1 basic sets of dipaths. 
 
Similarly to Cases 1 and 2,  it can be verified that $\{L_0, L_1, L_2, L_3, L_4\}$ satisfies the hypothesis of Lemma \ref{lem:list3}. 
 
\noindent {\bf Case 4:} $p=19$. See Figures \ref{fig:m19.1.1}-\ref{fig:m19.5.2}.

Let $L_0=(W_0, X_0, Y_0, Z_0, Q_0, R_0, S_0, T_0)$ where

\begin{multicols}{2}
\medskip
{\centering
  $ \displaystyle
    \begin{aligned} 
&W_0=y_0y_1 x_4 y_4 y_7 x_{10} x_{13} y_{14} y_{17} x_{20};\\
&X_0=x_1 y_2 y_5 y_6 x_7 y_{10} x_{11} y_{12} y_{13} y_{16} y_{19} ;\\
&Y_0=x_0 y_3 x_3 x_6 y_9 x_9 x_{12} y_{15} x_{18} x_{21};\\
&Z_0=x_2 x_5 y_8 x_8 y_{11} x_{14} x_{15} x_{16} x_{17} y_{18} x_{19};\\
    \end{aligned}
  $ 
\par}
\medskip
{\centering
  $ \displaystyle
    \begin{aligned} 
&Q_0=x_{20} y_{21} x_{22} y_{23} x_{26} y_{27} x_{28} y_{29} x_{32};\\
&R_0= y_{19} y_{22} y_{25} y_{28} y_{31};\\
&S_0=x_{21} x_{24} x_{27} x_{30} x_{33};\\
&T_0=x_{19} y_{20} x_{23} y_{24} x_{25} y_{26} x_{29} y_{30} x_{31}.
    \end{aligned}
  $ 
\par}
\medskip
\end{multicols}

Let $L_1=(W_1, X_1, Y_1,  Z_1, Q_1, R_1, S_1, T_1)$ where

\begin{multicols}{2}
{\centering
  $ \displaystyle
    \begin{aligned} 
&W_1=y_2x_2 y_5 x_8 y_9 y_{12} x_{12} y_{13} x_{16} y_{17} x_{17} x_{20};\\
&X_1=x_1 x_4 x_7 y_8 x_{11} x_{14} y_{15} y_{18} y_{21};\\
&Y_1=x_0 y_1 y_4 x_5 y_6 x_9 x_{10} y_{11} y_{14} x_{15} x_{18} y_{19};\\
&Z_1=y_0 x_3 y_3 x_6 y_7 y_{10} x_{13} y_{16} x_{19};\\
    \end{aligned}
  $ 
\par}
{\centering
  $ \displaystyle
    \begin{aligned} 
&Q_1=x_{20} x_{21} y_{22} x_{23} x_{26} x_{27} y_{28} x_{29} x_{32};\\
&R_1=y_{21} y_{24} y_{27} y_{30} y_{33};\\
&S_1=y_{19} y_{20} y_{23} x_{24} x_{25} x_{28}y_{31};\\
&T_1=x_{19} x_{22} y_{25} y_{26} y_{29} x_{30} x_{31}.
    \end{aligned}
  $ 
\par}
\end{multicols}

Let $L_2=(W_2, X_2, Y_2,  Z_2, Q_2, R_2, S_2, T_2)$ where

\begin{multicols}{2}
{\centering
  $ \displaystyle
    \begin{aligned} 
&W_2=y_2x_5 y_5 x_6 x_9 y_{10} y_{13} x_{13} x_{14} y_{17} y_{20};\\
&X_2=y_1 x_1 y_4 x_7 x_{10} x_{11} y_{14} x_{17} x_{18} y_{21};\\
&Y_2=x_0 y_0 y_3 y_6 y_9 x_{12} x_{15} y_{18} x_{21};\\
&Z_2=x_2 x_3 x_4 y_7 x_8 y_8 y_{11} y_{12} y_{15} y_{16} x_{16} x_{19};
    \end{aligned}
  $ 
\par}
{\centering
  $ \displaystyle
    \begin{aligned} 
&Q_2=y_{20} x_{20} x_{23} y_{23} y_{26} x_{26} x_{29} y_{29} y_{32};\\
&R_2= y_{21} x_{24} y_{27} x_{30} y_{33};\\
&S_2=x_{21} y_{24} x_{27} y_{30} x_{33};\\
&T_2=x_{19} y_{19} x_{22} y_{22} x_{25} y_{25} x_{28} y_{28} x_{31}.
    \end{aligned}
  $ 
\par}
\end{multicols}

For $k=0$, we replace each dipath in $\{Q_i, R_i, S_i, T_i\ |\ i=0,1,2\}$ with a dipath of length $0$ with the same source. It can be verified that the 8-tuples $L_0$, $L_1$, and $L_2$ satisfy conditions C1-C6 of Definition \ref{defn:basic2}, and thus are type-2 basic sets of dipaths.

Let $L_3=(X_3, Y_3, R_3, S_3)$ where

\medskip
{\centering
  $ \displaystyle
    \begin{aligned} 
&X_3=y_0 x_1 y_1 x_2 y_2 y_3 x_4 x_5 x_6 y_6 y_7 x_7 x_8 x_{11} y_{11} x_{12} y_{12} x_{13}x_{16} y_{19};\\
&Y_3=x_0 x_3 y_4 y_5 y_8 x_9 y_9 y_{10} x_{10} y_{13} x_{14} y_{14} y_{15} x_{15}y_{16} x_{17} y_{17} y_{18} x_{18} x_{19} ;\\
&R_3= y_{19} x_{20} y_{20} y_{21} x_{21} x_{22} x_{25} y_{28} y_{29} x_{29} x_{30} y_{30} y_{31};\\
&S_3=x_{19}y_{22} y_{23} x_{23} x_{24} y_{24} y_{25} x_{26} y_{26} y_{27} x_{27} x_{28} x_{31}.
    \end{aligned}
  $ 
\par}
\medskip

Let $L_4=(X_4, Y_4, R_4, S_4)$ where
 
\medskip
{\centering
  $ \displaystyle
    \begin{aligned} 
&X_4=y_1 y_2 x_3y_6 x_6x_7 y_7 y_8 y_9 x_{10} y_{10} y_{11} x_{11} x_{12}x_{13} y_{13} y_{14} x_{14} x_{17} y_{20};\\
&Y_4=y_0 x_0 x_1 x_2 y_3 y_4 x_4y_5 x_5 x_8 x_9 y_{12} x_{15} y_{15} x_{16}y_{16}y_{17} x_{18} y_{18}y_{19};\\
 &R_4=y_{20} x_{21} y_{21} y_{22}x_{22} x_{23} y_{26} x_{27} y_{27} y_{28}x_{28} x_{29} y_{32};\\
&S_4= y_{19} x_{19} x_{20} y_{23} y_{24}x_{24} y_{25} x_{25}x_{26} y_{29} y_{30}x_{30}y_{31}.
    \end{aligned}
  $ 
\par}
\medskip

Again, for $k=0$, we replace each dipath in $\{R_i, S_i \ |\ i=3, 4\}$ with a dipath of length $0$ with the same source. Similarly to the three previous cases, it can be verified that quadruples $L_3$ and $L_4$ satisfy conditions C1-C6 of Definition \ref{defn:basic1}, and thus are type-1 basic sets of dipaths. 

Lastly, it can be verified that $\{L_0, L_1, L_2, L_3, L_4\}$ satisfies the hypothesis of Lemma \ref{lem:list3}. \end{proof}

\section{Proof of the main result}

In this section, we use Lemmas \ref{lem:list2} and \ref{lem:list1}, and Proposition \ref{thm:cand} to construct a $\vec{C}_m$-factorization of $K^*_{2m}$. Theorems \ref{berm} and \ref{Ng2},  and Lemmas \ref{lem:disj} and \ref{lem:deco} below are tools that we will use to obtain the desired construction. 

\begin{theorem} \cite{BerFavMah}
\label{berm}
A connected 4-regular circulant on $m$ vertices admits a $C_m$-decomposition. 
\end{theorem}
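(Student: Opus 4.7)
The plan is to show that the connected 4-regular circulant $X = X(m, \{a, b\})$, with $1 \leq a < b \leq \lfloor m/2 \rfloor$ and $\gcd(a, b, m) = 1$, decomposes into two edge-disjoint Hamilton cycles; since $X$ has $2m$ edges, this is exactly a $C_m$-decomposition.

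First I would normalize the parameters. For any unit $u \in \mathds{Z}_m^{*}$, multiplication by $u$ is a graph isomorphism $X(m, \{a, b\}) \to X(m, \{ua, ub\})$, and differences $d$ and $m - d$ give the same edges. Writing $d_a = \gcd(a, m)$ and $d_b = \gcd(b, m)$, connectivity of $X$ forces $\gcd(d_a, d_b) = 1$. I would then split into cases according to whether each of $d_a, d_b$ equals $1$.

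The warm-up case is $d_a = d_b = 1$: the edges of difference $a$ already form a single Hamilton cycle, and so do the edges of difference $b$, so the decomposition is immediate. In the intermediate case $d_a = 1 < d_b$, I would normalise to $a = 1$; the difference-$b$ edges partition into $d_b$ cycles of length $m/d_b$, one per coset of $\langle b \rangle$ in $\mathds{Z}_m$. I would then use a ``hop'' construction: build one Hamilton cycle by traversing each coset via $\pm b$-edges and inserting $d_b$ carefully placed $\pm 1$-edges to move between cosets, with the hops spaced along the difference-$1$ cycle so that the complementary edges also form a single Hamilton cycle.

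The main obstacle is the case $d_a, d_b > 1$. Here neither same-difference subgraph is connected, so both Hamilton cycles in the decomposition must genuinely mix the two differences. Exploiting $\gcd(d_a, d_b) = 1$, I would construct explicit Hamilton cycles that alternate runs of difference-$a$ and difference-$b$ edges in a periodic pattern, chosen so that each run inside a coset of $\langle a \rangle$ (or $\langle b \rangle$) terminates at a vertex from which the next difference jumps to a new coset. The verification that each Hamilton cycle visits every vertex exactly once reduces to an elementary argument using $\gcd(d_a, d_b) = 1$, and small values of $(d_a, d_b)$ may need to be handled separately as base cases. Simultaneously certifying connectivity of both constructed 2-factors is the delicate combinatorial step, and is where the essential content of the argument lies.
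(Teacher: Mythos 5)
This statement is not proved in the paper at all: it is imported verbatim from Bermond, Favaron and Mah\'eo \cite{BerFavMah}, where it is a special case of their theorem that every connected $4$-regular Cayley graph on a finite abelian group decomposes into two Hamiltonian cycles. So the only question is whether your proposal constitutes a self-contained proof, and it does not. Your reformulation is correct: a $C_m$-decomposition of a $4$-regular graph on $m$ vertices is exactly a partition of its $2m$ edges into two Hamiltonian cycles, the normalization by units of $\mathds{Z}_m$ is legitimate, and connectivity is indeed equivalent to $\gcd(d_a,d_b)=1$ where $d_a=\gcd(a,m)$ and $d_b=\gcd(b,m)$. The case $d_a=d_b=1$ is genuinely immediate. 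But everything after that is a plan rather than an argument.

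Concretely, two gaps remain. In the case $d_a=1<d_b$ you describe a ``hop'' construction with ``carefully placed'' $\pm 1$-edges, but you never exhibit the placement nor verify the step that usually fails: that the \emph{complementary} $2$-factor (the edges not used by the first Hamiltonian cycle) is itself connected. That verification is not automatic and is where such constructions typically break. In the main case $d_a,d_b>1$ (e.g.\ $X(15,\{5,6\})$, which actually arises in this paper's application to $X(m,\{5,6\})\oplus\cdots$), you say only that one should alternate runs of difference-$a$ and difference-$b$ edges ``chosen so that'' both resulting $2$-factors are connected, and you yourself identify this as ``where the essential content of the argument lies.'' Naming the essential content is not supplying it; the Bermond--Favaron--Mah\'eo proof of precisely this step is a nontrivial induction with case analysis, and nothing in your proposal substitutes for it. As written, the proposal establishes the easy case and defers the theorem in all others.
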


\begin{theorem}\cite{Ng} \label{Ng2}
Let $m$ be odd and $n\geqslant 3$. Each of $\vec{C}_m \wr K^*_n$ and $\vec{C}_m \wr \vec{C}_n$ admits a $\vec{C}_{mn}$-decomposition.
\end{theorem}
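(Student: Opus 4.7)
The plan is to build explicit $\vec{C}_{mn}$-decompositions of $\vec{C}_m \wr K_n^*$ and $\vec{C}_m \wr \vec{C}_n$ by separating each digraph into a ``vertical'' subdigraph (arcs of the form $((i,u),(i+1,v))$ crossing fiber boundaries) and a ``horizontal'' subdigraph (arcs within a single fiber). In both cases, the vertical subdigraph is precisely $\vec{C}_m \wr \overline{K}_n$, which under the identification $(i,j) \mapsto i + jm$ is the directed circulant $\vec{X}(mn, \{1+km : 0 \leqslant k \leqslant n-1\})$; the horizontal subdigraph is $m$ disjoint copies of $K_n^*$ (respectively $\vec{C}_n$).

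The central lemma I would establish is that $\vec{C}_m \wr \overline{K}_n$ itself admits a decomposition into $n$ arc-disjoint directed Hamilton cycles of length $mn$. I would encode each such cycle by an $m$-tuple of permutations $(\sigma_0,\ldots,\sigma_{m-1})$ of $\mathbb{Z}_n$: starting at $(0,j_0)$, the cycle visits $(i{+}1,\sigma_i(j_i))$ after $(i,j_i)$, and it is Hamilton precisely when $\tau = \sigma_{m-1}\circ\cdots\circ\sigma_0$ is a cyclic permutation of order $n$. Arc-disjointness of the $n$ Hamilton cycles is equivalent to $\{\sigma_i^r\}_{r=1}^n$ being a sharply transitive subset of $\mathrm{Sym}(\mathbb{Z}_n)$ at each fiber interface $i$. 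A concrete realization uses translations $\sigma_i^r(j) = j + a_i^r \pmod{n}$: one then needs an $m\times n$ integer matrix $(a_i^r)$ whose rows are complete residue systems modulo $n$ and whose column sums $\sum_i a_i^r$ are each coprime to $n$. The hypothesis that $m$ is odd is used here to guarantee the existence of such a matrix by an elementary constructive argument.

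With this lemma in hand, the decomposition of $\vec{C}_m \wr \vec{C}_n$ into $n+1$ Hamilton cycles follows by keeping $n-1$ of the vertical Hamilton cycles unchanged and merging the remaining vertical arcs with the $mn$ horizontal $\vec{C}_n$-arcs into two new Hamilton cycles: for each fiber, the $\vec{C}_n$ is split into two complementary directed paths, and the two new Hamilton cycles are obtained by rerouting two of the original vertical Hamilton cycles through these paths, with the splits chosen so that the endpoints align with the vertical arcs being substituted. Similarly, for $\vec{C}_m \wr K_n^*$ (which needs $2n-1$ Hamilton cycles), I would use $n-1$ vertical Hamilton cycles unchanged and construct $n$ ``mixed'' Hamilton cycles that together partition the remaining $mn$ vertical arcs and all $mn(n-1)$ horizontal arcs.

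The main obstacle will be engineering the mixed cycles for small $n$. The cleanest version would request a decomposition of $K_n^*$ into $n$ directed Hamilton paths with prescribed endpoints matching the vertical-arc leftovers, yielding $n$ mixed cycles each visiting each fiber exactly once; but this already fails at $n=3$, where $K_3^*$ does not decompose into three Hamilton paths. The argument must therefore permit mixed cycles with varying numbers of visits per fiber (some with one visit traversing a Hamilton path, others with two visits traversing shorter chunks), arranged so that the total arc counts balance and arc-disjointness is preserved. Handling this case analysis cleanly, and disposing of any exceptional small values of $n$ by direct construction, is where the bulk of the technical work would lie.
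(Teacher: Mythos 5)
Note first that the paper offers no proof of Theorem \ref{Ng2}: the statement is quoted from Ng \cite{Ng}, so there is no in-paper argument to compare yours against, and your attempt must be judged on its own terms. On those terms it has two genuine gaps. The first is in your ``concrete realization'' of the central lemma. You ask for an $m\times n$ integer matrix $(a_i^r)$ whose rows are complete residue systems modulo $n$ and whose column sums are each coprime to $n$, and you assert that the oddness of $m$ guarantees its existence. In fact, for $n\equiv 2\pmod 4$ no such matrix exists when $m$ is odd: each row sums to $0+1+\cdots+(n-1)=n(n-1)/2$, so the total of all entries is $m\cdot\tfrac{n}{2}\cdot(n-1)$, which is odd because all three factors $m$, $n/2$, $n-1$ are odd; but for even $n$, coprimality to $n$ forces every column sum to be odd, and $n$ odd numbers (with $n$ even) sum to an even number --- a contradiction. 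So already for $n=6$ your construction cannot be carried out, and the role you assign to the parity of $m$ is backwards: for instance $m=2$, $n=3$ admits the matrix with rows $(0,1,2)$ and $(1,0,2)$, whose column sums $1,1,4$ are all coprime to $3$. To salvage the lemma for $n\equiv 2\pmod 4$ you would need genuinely non-translation permutations (general sharply transitive sets at the interfaces), which you have not constructed.

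The second, more structural, gap is that the hypothesis that $m$ is odd is never used where it must be. Your vertical lemma, as just noted, holds for $m=2$, $n=3$; yet the theorem itself is false there, because $\vec{C}_2 \wr K^*_3 \cong K^*_6$, and $K^*_6$ admits no decomposition into directed Hamilton cycles (Tillson \cite{Til}; this is the exception $(6,6)$ of \cite{AlsGavSaj} cited in this paper). Consequently, any correct proof must invoke the oddness of $m$ in the step that absorbs the horizontal arcs into ``mixed'' Hamilton cycles --- exactly the step you leave open, conceding that the clean version fails at $n=3$ and that ``the bulk of the technical work'' lies there. As written, the proposal is a plan whose two load-bearing constructions are, respectively, impossible as stated and missing; it is not a proof.
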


\begin{lemma}\cite{BurSaj}
\label{lem:disj}
If a digraph $H$ admits a $\vec{C}_m$-factorization, then $kH$ admits a $\vec{C}_m$-factorization. 
\end{lemma}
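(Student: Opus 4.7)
The plan is to argue directly from the definition. Let $\{F_1, F_2, \ldots, F_t\}$ be a $\vec{C}_m$-factorization of $H$, which exists by hypothesis. Since $kH$ denotes the digraph consisting of $k$ vertex-disjoint copies of $H$, I would label these copies $H^{(1)}, H^{(2)}, \ldots, H^{(k)}$ and, for each copy $H^{(j)}$, let $F_i^{(j)}$ denote the natural image of $F_i$ under the isomorphism $H \cong H^{(j)}$.

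The construction is then immediate: for each $i \in \{1, 2, \ldots, t\}$, define
\[
\widetilde{F}_i = F_i^{(1)} \cup F_i^{(2)} \cup \cdots \cup F_i^{(k)}.
\]
I would first verify that each $\widetilde{F}_i$ is a $\vec{C}_m$-factor of $kH$: it is a spanning subdigraph of $kH$ because each $F_i^{(j)}$ spans $V(H^{(j)})$, and it is a disjoint union of directed $m$-cycles because the sets $V(H^{(j)})$ are pairwise disjoint and each $F_i^{(j)}$ is itself a disjoint union of $\vec{C}_m$'s. Next, I would check that $\{\widetilde{F}_1, \widetilde{F}_2, \ldots, \widetilde{F}_t\}$ partitions $A(kH)$: any arc of $kH$ lies in some unique copy $H^{(j)}$, and since $\{F_1^{(j)}, \ldots, F_t^{(j)}\}$ partitions $A(H^{(j)})$, the arc lies in exactly one $\widetilde{F}_i$.

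There is essentially no obstacle here; the lemma is a bookkeeping observation that $\vec{C}_m$-factorizations behave well under disjoint union. The only thing to be careful about is making the indexing precise so that the partition claim is unambiguous, which is handled by the fact that disjoint copies have disjoint arc sets.
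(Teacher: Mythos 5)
Your argument is correct and is the standard bookkeeping proof: transport each factor $F_i$ into every disjoint copy of $H$ and take the union over copies, which clearly yields spanning $\vec{C}_m$-factors that partition $A(kH)$. The paper itself states this lemma as a cited result from Burgess and \v{S}ajna without reproving it, and your construction is exactly the argument one would expect there, so there is nothing to flag.
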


\begin{lemma} \cite{BurSaj}
\label{lem:deco}
Let $\{H_1, H_2, \ldots, H_k\}$ be a decomposition of a digraph $G$ into spanning sub-digraphs. If each $H_i$ admits a $\vec{C}_m$-factorization, then $G$ admits a $\vec{C}_m$-factorization.
\end{lemma}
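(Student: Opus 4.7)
The plan is to construct a $\vec{C}_m$-factorization of $G$ by simply taking the union of the given $\vec{C}_m$-factorizations of the $H_i$'s, and to verify the two defining properties of such a factorization. For each $i \in \{1, 2, \ldots, k\}$, let $\mathcal{F}_i = \{F_{i,1}, F_{i,2}, \ldots, F_{i, r_i}\}$ be a $\vec{C}_m$-factorization of $H_i$, and form the combined collection
\[
\mathcal{F} = \bigcup_{i=1}^{k} \mathcal{F}_i.
\]
I would then claim that $\mathcal{F}$ is a $\vec{C}_m$-factorization of $G$.

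First I would verify that each element of $\mathcal{F}$ is a $\vec{C}_m$-factor of $G$. By assumption, each $F_{i,j}$ is a $\vec{C}_m$-factor of $H_i$, so its arc set decomposes into disjoint directed $m$-cycles whose vertex sets partition $V(H_i)$. The key observation is that $H_i$ is a spanning subdigraph of $G$, so $V(H_i) = V(G)$. Therefore the same disjoint directed $m$-cycles partition $V(G)$, meaning $F_{i,j}$ is also a $\vec{C}_m$-factor of $G$.

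Next I would check that the arc sets $\{A(F_{i,j}) : 1 \leqslant i \leqslant k,\ 1 \leqslant j \leqslant r_i\}$ form a partition of $A(G)$. For each fixed $i$, the fact that $\mathcal{F}_i$ is a factorization of $H_i$ (using Definition \ref{def:1}) means $\{A(F_{i,1}), \ldots, A(F_{i,r_i})\}$ partitions $A(H_i)$. Since $\{H_1, H_2, \ldots, H_k\}$ is a decomposition of $G$, the sets $\{A(H_1), A(H_2), \ldots, A(H_k)\}$ partition $A(G)$. Refining each block of this partition by its corresponding inner partition yields a partition of $A(G)$ whose blocks are exactly the $A(F_{i,j})$.

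There is really no substantial obstacle here: the lemma is essentially a bookkeeping observation, and the only subtlety worth flagging is the spanning hypothesis, which is precisely what guarantees that a $\vec{C}_m$-factor of $H_i$ remains a $\vec{C}_m$-factor of $G$ rather than merely a vertex-disjoint union of $m$-cycles missing some vertices of $G$.
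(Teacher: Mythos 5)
Your proof is correct. Note that the paper does not prove this lemma at all—it is quoted from \cite{BurSaj} as a known tool—so there is no internal proof to compare against; your argument (refine the partition $\{A(H_1),\ldots,A(H_k)\}$ of $A(G)$ by the factorization of each $H_i$, with the spanning hypothesis guaranteeing that each $\vec{C}_m$-factor of $H_i$ is also a $\vec{C}_m$-factor of $G$) is exactly the standard bookkeeping argument one would give.
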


In Lemma \ref{lem:2}, we showed that $L_{2m}$ does not admit a $\vec{C}_m$-factorization when $3|m$. We circumvent this case using Lemma \ref{thm:reduction} below. This lemma reduces Problem \ref{prob:ini} to the case $m$ is odd and $3 \nmid m$. 

\begin{lemma} 
\label{thm:reduction}
Let $m$ be odd. If $K^*_{2m}$ admits a $\vec{C}_m$-factorization, then $K^*_{2(3m)}$ admits a $\vec{C}_{3m}$-factorization. 
\end{lemma}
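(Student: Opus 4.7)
The plan is to decompose $K^*_{6m}$ into spanning subdigraphs, each admitting a $\vec{C}_{3m}$-factorization, and then invoke Lemma \ref{lem:deco}. Take a $\vec{C}_m$-factorization $\{F_1, \ldots, F_{2m-1}\}$ of $K^*_{2m}$, with $F_i = C_i^{(1)} \cup C_i^{(2)}$ a union of two disjoint directed $m$-cycles, and identify $V(K^*_{6m})$ with $V(K^*_{2m})\times \mathds{Z}_3$ so that $K^*_{6m} = K^*_{2m} \wr K^*_3$. Writing $K^*_3 = \vec{C}_3 \oplus \vec{C}_3^{-1}$, where $\vec{C}_3$ and $\vec{C}_3^{-1}$ are the two directed $3$-cycles whose union is $K^*_3$, the central arc-disjoint decomposition to be established is
\begin{equation*}
K^*_{6m} = \bigl(F_1 \wr \vec{C}_3\bigr) \oplus \bigl(F_2 \wr \vec{C}_3^{-1}\bigr) \oplus \bigoplus_{j=3}^{2m-1}\bigl(F_j \wr \overline{K}_3\bigr).
\end{equation*}
This decomposition is valid because each $F_i$ is a spanning $2$-factor of $K^*_{2m}$: the $F_j \wr \overline{K}_3$ summands jointly account for all between-column arcs (which together form $K^*_{2m} \wr \overline{K}_3$), while $F_1 \wr \vec{C}_3$ and $F_2 \wr \vec{C}_3^{-1}$ additionally absorb the within-column arcs $\overline{K}_{2m} \wr \vec{C}_3$ and $\overline{K}_{2m} \wr \vec{C}_3^{-1}$, whose union is exactly $\overline{K}_{2m} \wr K^*_3$.

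For $F_1 \wr \vec{C}_3 = (C_1^{(1)} \wr \vec{C}_3) \oplus (C_1^{(2)} \wr \vec{C}_3)$, the two summands live on disjoint vertex sets of size $3m$, and Theorem \ref{Ng2} supplies a $\vec{C}_{3m}$-decomposition of each $\vec{C}_m \wr \vec{C}_3$ into four directed Hamilton cycles. Pairing one Hamilton cycle from each half and taking all four such pairings yields four $\vec{C}_{3m}$-factors of $F_1 \wr \vec{C}_3$; the identical argument handles $F_2 \wr \vec{C}_3^{-1}$, giving four more factors.

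The main obstacle is treating each $F_j \wr \overline{K}_3$ with $j \geq 3$, since Theorem \ref{Ng2} does not directly cover $\vec{C}_m \wr \overline{K}_3$. The plan is to prove a self-contained Hamilton-decomposition lemma: $\vec{C}_m \wr \overline{K}_3$ admits a decomposition into three directed Hamilton $3m$-cycles, obtained by a Latin-square construction in which, at each of the $m$ column transitions, one assigns three permutations of $\mathds{Z}_3$ (one per Hamilton track) forming a Latin triple, subject to the constraint that for each track $k$ the product of assigned permutations is a non-identity $3$-cycle. A uniform shift pattern suffices when $3 \nmid m$; when $3 \mid m$ the uniform choice degenerates and the assignment must be perturbed at a few transitions, which is the delicate case I expect to require the most care. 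Once this is in place, pairing Hamilton cycles across $C_j^{(1)} \wr \overline{K}_3$ and $C_j^{(2)} \wr \overline{K}_3$ yields three $\vec{C}_{3m}$-factors of each $F_j \wr \overline{K}_3$; the total count $4 + 4 + 3(2m - 3) = 6m - 1$ matches the regularity of $K^*_{6m}$, and Lemma \ref{lem:deco} then furnishes the desired $\vec{C}_{3m}$-factorization.
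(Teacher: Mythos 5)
Your overall strategy coincides with the paper's: write $K^*_{6m}=K^*_{2m}\wr K^*_3$, distribute the arcs among the factors $F_i$ of the given $\vec{C}_m$-factorization so that each summand is a disjoint union of two wreath products of $\vec{C}_m$ with a digraph on three vertices, decompose each such wreath product into directed Hamiltonian cycles, and finish with Lemmas \ref{lem:disj} and \ref{lem:deco}. Your only structural deviation is to split the within-column arcs $\overline{K}_{2m}\wr K^*_3$ between two factors as $\vec{C}_3\oplus\vec{C}_3^{-1}$, whereas the paper assigns all of them to $F_1$, producing $F_1\wr K^*_3\cong 2(\vec{C}_m\wr K^*_3)$, which is covered verbatim by Theorem \ref{Ng2}. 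Your variant is also valid (since $\vec{C}_3^{-1}\cong\vec{C}_3$, each of your first two summands is $2(\vec{C}_m\wr\vec{C}_3)$, again covered by Theorem \ref{Ng2}), but it buys nothing: the remaining $2m-3$ summands still force you to handle $\vec{C}_m\wr\overline{K}_3$.

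That is where the genuine gap lies. You correctly note that Theorem \ref{Ng2} as quoted does not mention $\vec{C}_m\wr\overline{K}_3$, and you propose to prove its decomposability into three directed Hamiltonian $3m$-cycles yourself, but what you give is only a sketch: you set up the Latin-square/permutation-product framework and then explicitly defer the case $3\mid m$ as ``delicate'' and unresolved. That case cannot be waved away, because in the proof of Theorem \ref{thm:main} the present lemma is applied iteratively with $m=3^{r-1}t$, so the inner $m$ is divisible by $3$ at every application after the first. The claim is true and your framework does carry it: for order $3$ the three permutations used at a single transition must pairwise disagree everywhere, which forces them to be either the three cyclic shifts or the three transpositions; the three track-products then always have shift-sums adding to $0$ in $\mathds{Z}_3$, hence are either all equal or pairwise distinct (the latter forcing one product to be the identity, i.e.\ a degenerate track), and a short counting argument shows one can always make all three equal to the same nonzero shift, for every $m\geqslant 2$ including $3\mid m$. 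None of this is in your write-up, however. The paper sidesteps the issue by relying on Ng's result (the theorem in \cite{Ng} does cover $\vec{C}_m\wr\overline{K}_n$ for $m$ odd, even though the excerpt stated as Theorem \ref{Ng2} omits it). As written, your proposal is incomplete until the $\vec{C}_{3m}$-decomposition of $\vec{C}_m\wr\overline{K}_3$ is actually proved or properly cited.
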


\begin{proof}
Assume that $K^*_{2m}$ admits a $\vec{C}_m$-factorization. Let $F_1, F_2, \ldots, F_{2m-1}$ be the corresponding $\vec{C}_m$-factors, so for each  $k \in \{1, 2, \ldots, 2m\minus1 \}$, we have $F_k \cong 2\vec{C}_m$. Then, we can  obtain the following decomposition of $K^*_{2(3m)}$ into spanning subdigraphs:

\begin{center}
$K^*_{2(3m)}=K^*_{2m}\wr K^*_3 =(F_1\oplus F_2\oplus\ldots \oplus F_{2m-1})\wr K^*_3=F_1 \wr K^*_3\oplus F_2 \wr \overline{K}_3 \oplus \ldots \oplus F_{2m-1} \wr \overline{K}_3.$
\end{center}

Observe that $F_1\wr K^*_3\cong 2(\vec{C}_m\wr K^*_3)$ and $F_i\wr \overline{K_3} \cong 2(\vec{C}_m \wr \overline{K}_3)$ for $i>1$.  Theorem \ref{Ng2}, in conjunction with Lemma \ref{lem:disj}, implies that each of $2(\vec{C}_{m} \wr K^*_3)$ and  $2(\vec{C}_m \wr \overline{K}_3)$ admits a $\vec{C}_{3m}$-factorization, respectively. In conclusion, Lemma \ref{lem:deco} implies that $K^*_{2(3m)}$ admits a $\vec{C}_{3m}$-factorization. \end{proof}

Observe that Lemma \ref{thm:reduction} is vacuous when $m=3$ because $K^*_6$ does not admit a $\vec{C}_3$-factorization \cite{BerGerSot}. 

Now, we proceed with the proof of our main theorem, Theorem \ref{thm:main}, restated below for convenience.

\noindent {\bf Theorem 3} Let $m$ be an odd integer such that $m \geqslant 5$. The digraph $K^*_{2m}$ admits a $\vec{C}_m$-factorization.
\\

\begin{proof}
First, assume that $m \equiv 1, 5\ (\textrm{mod}\ 6)$ and $m \geqslant 13$. We begin by strategically decomposing the graph $K_m$. If $m \equiv 1\ (\textrm{mod}\ 4)$, then
 \begin{center}
$K_m=X(m, \{1,3\})\oplus X(m, \{2,4\})\oplus X(m, \{5,6\})\oplus \ldots \oplus X(m, \{\frac{m-3}{2}, \frac{m-1}{2}\})$. 
\end{center} 
 \noindent If $m \equiv 3\ (\textrm{mod}\ 4)$, then 
 \begin{center}
 $K_m=X(m, \{1,3\})\oplus X(m, \{2\})\oplus X(m, \{4,5\})\oplus \ldots \oplus X(m, \{\frac{m-3}{2}, \frac{m-1}{2}\})$. 
\end{center}  
 \noindent Since each 4-regular circulant in this decomposition of $K_m$ is connected, Theorem \ref{berm} implies that $K_m$ can be decomposed into one copy of $X(m, \{1,3\})$ and $\frac{m-5}{2}$ Hamiltonian cycles. Each Hamiltonian cycle of $K_m$ is isomorphic to $X(m, \{1\})$. It follows that $K^*_m$ admits a decomposition into one copy of $\vec{X}(m, \{\pm1,\pm3\})$ and $\frac{m-5}{2}$ copies of $\vec{X}(m, \{\pm 1\})$. Consequently, we see that $K_{2m}^*=K^*_m \wr K^*_2$ admits a decomposition into $\frac{m-5}{2}$ copies of $\vec{X}(m, \{\pm 1\})\wr \overline{K}_2 \cong H_{2m}$, and a copy of $\vec{X}(m, \{\pm1, \pm 3\}) \wr K^*_2 \cong L_{2m}\oplus G_{2m}$. Since $H_{2m}, L_{2m}$, and $G_{2m}$ each admit a $\vec{C}_m$-factorization by Lemmas \ref{lem:1} and \ref{lem:2}, and Proposition \ref{thm:cand}, respectively, it follows that $K^*_{2m}$ admits a $\vec{C}_m$-factorization. 

Finally, assume that $m \equiv 3\ (\textrm{mod}\ 6)$ or $5 \leqslant m \leqslant 11$. If $m=3^rt$ for some $t\equiv 1, 5\ (\textrm{mod}\ 6)$, $t \geqslant 13$, and $r \geqslant 1$, then a $\vec{C}_t$-factorization of $K^*_{2t}$ exists by the above, and the existence of a $\vec{C}_m$-factorization of $K^*_{2m}$ is established by a repeated application of Lemma \ref{thm:reduction}.

Otherwise, if $m=3^rt$ for $t\in \{5,7,9,11\}$, and $r=0$ then a $\vec{C}_t$-factor of $K^*_{2t}$ exists by Theorem \ref{BurFraSaj} \cite{BurFranSaj}, and if $r\geqslant 1$, then a repeated application of Lemma \ref{thm:reduction} can be used to show existence of a $\vec{C}_m$-factorization of $K_{2m}^*$.
\end{proof}

\medskip

\noindent{\bf \large Acknowledgements}

\noindent The author would like to thank her PhD supervisor Mateja \v{S}ajna for her guidance and support over the course of this project. The author would also like to thank the NSERC CGS-D scholarship program for its financial support.

\appendix

\section{Illustrating the proof of Proposition \ref{thm:cand}}

We illustrate all dipaths built in the proof of Proposition \ref{thm:cand}. Recall that the edges are assumed to be oriented from left to right, and vertical arcs are oriented to yield a dipath. 

\noindent {\bf Case 1:} $p=11$. 

\begin{figure}[H]
\begin{subfigure}[c]{1 \textwidth}


\caption{The dipaths $R_4$ (red) and $S_4$ (dark blue)}

\end{subfigure}
\caption{The type-1 basic set of dipaths $L_4$ for $p=19$.}
\label{fig:m19.5.2}
\end{figure}

\end{document}